\numberwithin{equation}{section}
\newtheorem{theorem}[equation]{Theorem}
\newtheorem{lemma}[equation]{Lemma}
\newtheorem{corollary}[equation]{Corollary}
\newtheorem{proposition}[equation]{Proposition}
\theoremstyle{definition}
\newtheorem{iesetup}[equation]{Index and Period Setup}
\theoremstyle{remark}
\newtheorem{remark}[equation]{Remark}
\newcommand{\per}{\mathop{\rm per}}
\newcommand{\Q}{\ensuremath{\mathbb{Q}}}
\newcommand{\Z}{\ensuremath{\mathbb{Z}}}
\renewcommand{\P}{\ensuremath{\mathbb{P}}}
\renewcommand{\H}{\ensuremath{\mathrm{H}}}
\newcommand{\Hom}{\ensuremath{\mathop{\mathrm{Hom}}}}
\newcommand{\chara}{\ensuremath{\mathop{\mathrm{char}}}}
\newcommand{\cor}{\ensuremath{\mathop{\mathrm{cor}}\nolimits}}
\newcommand{\Br}{\ensuremath{\mathop{\mathrm{Br}}}}
\newcommand{\lcm}{\ensuremath{\mathop{\mathrm{lcm}}}}
\newcommand{\res}{\ensuremath{\mathop{\mathrm{res}}}}
\newcommand{\slashfrac}[2]{\ensuremath{\raise1ex\hbox{#1}\kern-.2em/\kern-.30em\lower1ex\hbox{#2}}}
\newcommand{\ind}{\ensuremath{\mathop{\mathrm{ ind }}}}
\newcommand{\inv}{\ensuremath{\mathop{\mathrm{inv}}}}
\def\df={\buildrel \textrm{df}\over =}
\def\Frac{\mathop{\textrm{Frac}}\nolimits}
\def\Gal{\mathop{\textrm{Gal}}\nolimits}
\def\Rev{\mathop{\textrm{Rev}}\nolimits}
\def\id{\mathord{\textrm{id}}}
\def\unit{{\scriptstyle\mathord{\times}}}
\def\Spec{\mathop{\textrm{Spec}}\nolimits}
\def\Proj{\mathop{\textrm{Proj}}\nolimits}
\def\GG{\mathbb{G}}
\def\ZZ{\mathbb{Z}}
\def\PP{\mathbb{P}}
\def\QQ{\mathbb{Q}}
\def\H{\mathrm{H}}
\let\sheaf=\mathcal
\let\ideal=\mathfrak
\let\tensor=\otimes
\let\into=\hookrightarrow
\newif\ifproofmode
\long\def\comment#1{\ifproofmode\bgroup
  \color{red} \tiny #1\egroup\fi}
\begin{document}

\title{Indecomposable and Noncrossed Product Division Algebras over Function Fields of Smooth $p$-adic Curves}


\author{E.~Brussel}
\ead{brussel@mathcs.emory.edu}
\address{Emory University, Department of Mathematics and Computer
  Science, Atlanta, GA 30322, USA}

\author{K.~McKinnie\corref{cor1}\fnref{fn2}}
\ead{kelly.mckinnie@umontana.edu}
\address{University of Montana, Department of Mathematical Sciences, Missoula, MT
  59801, USA}
 \cortext[cor1]{corresponding author}

\author{E.~Tengan\fnref{fn3}}
\ead{etengan@icmc.usp.br}
\address{Universidade de S\~ao Paulo, Inst. de Ci\^encias Matem\'aticas e de Computa\c c\~ao, S\~ao Carlos, SP 13560-970,
  Brazil}

\fntext[fn2]{Supported by National Science Foundation grant DMS-0901516}
\fntext[fn3]{Supported by FAPESP grant 2008/57214-4.}

\begin{abstract} We construct indecomposable and noncrossed product
  division algebras over function fields of connected smooth curves
  $X$ over $\ZZ_p$.  This is done by defining an index preserving
  morphism $s:\Br(K\widehat(X))' \to \Br(K(X))'$ which splits
  $\res:\Br(K(X)) \to \Br(K\widehat(X))$, where $K\widehat(X)$ is the
  completion of $K(X)$ at the special fiber, and using it to lift
  indecomposable and noncrossed product division algebras over
  $K\widehat(X)$.
\end{abstract}

\begin{keyword}Brauer groups; division algebras; noncrossed products; indecomposable division algebras; ramification; function fields of smooth curves.
\end{keyword}
\maketitle


\section{Introduction}
Let $X$ be a connected smooth projective curve over $S=\Spec\Z_p$, let
$F=K(X)$ be its function field, and let $K\widehat(X)$ denote the
completion of $K(X)$ with respect to the discrete valuation on $K(X)$
defined by the special fiber $X_0$.  We define an index-preserving
homomorphism $$\Br(K\widehat(X))'\to\Br(K(X))'$$ that splits the
restriction map $\res:\Br(K(X))'\to\Br(K\widehat(X))'$.  Here the
``prime'' denotes ``prime-to-$p$''.  The field $K\widehat(X)$ is not
unlike a power series field over a number field, and using the methods
of \cite{Brussel} and \cite{Brussel6}, we construct certain exotic
kinds of division algebras over $K\widehat(X)$, and transfer these
constructions to $K(X)$ using our homomorphism.  In particular, we
have a new construction of noncrossed product division algebras and
indecomposable division algebras of unequal period and index over the
rational function field $\Q_p(t)$ (see Theorem \ref{t6} and Corollary
\ref{c4}).  The indexes of our noncrossed product examples are as low
as $q^2$, for $q$ an odd prime not equal to $p$, and $8$.

Recall if $K$ is a field, a {\it $K$-division algebra} $D$ is a division ring that is finite-dimensional
and central over $K$.  The {\it period} of $D$ is the order of the class $[D]$ in $\Br(K)$,
and the {\it index} $\ind(D)$ is the square root of $D$'s $K$-dimension.
A {\it noncrossed product} is a $K$-division algebra whose structure is not given by a Galois 2-cocycle.
Noncrossed products were first constructed by Amitsur in \cite{Amitsur}, settling a longstanding
open problem.  Since then there have been several other constructions, including
\cite{Saltman-NCP}, \cite{JW}, \cite{Brussel}, \cite{Brussel4}, \cite{RY}, \cite{Hanke} and \cite{HS}.  
Saltman recently showed that all division algebras of prime degree over our fields
are cyclic (\cite{Saltman-cyclicity}); the indexes of our examples are all divisible by the square of a prime.

A $K$-division algebra is {\it indecomposable} if it cannot be expressed as the tensor
product of two nontrivial $K$-division algebras.  It is easy to see that all division
algebras of equal period and index are indecomposable, and that all division algebras
of composite period are decomposable, so the problem of producing an indecomposable
division algebra is only interesting when the period and index are unequal prime-powers. 
Albert constructed decomposable division algebras of unequal (2-power) period and index in the 1930's,
but indecomposable division algebras of unequal period
and index did not appear until \cite{Saltman-Indecomposable} and \cite{ART}.
Since then there have been several constructions, 
including \cite{Tignol}, \cite{JW}, \cite{J_2}, \cite{SvdB}, \cite{Karpenko},
\cite{Brussel6}, and \cite{McKinnie}.  
In \cite{BT} two of the authors proved that over the function field of a $p$-adic curve,
any division algebra of (odd) prime period $q$ not equal to $p$ and index $q^2$ is decomposable,
completing the proof that all division algebras of prime period $q$ are crossed products
over such fields (the index $q$ case is \cite{Saltman-cyclicity}).

Noncrossed products over a rational function field $K(t)$ were
constructed in \cite{Brussel4}, for any $p$-adic field $K$.  However
the construction here is much more general, and our fields constitute
a much larger class.  For example, our methods apply to fields such as
$K(X)=\Q_p(t)(\sqrt{t^3+at+b})$, where $a,b\in\Z_p$, and $p\neq 2,3$
does not divide the discriminant $4a^3+27b^2$.  For here $K(X)$ is the
function field of the elliptic curve
$X=\Proj\Z_p[x,y,z]/(y^2z-x^3-axz^2-bz^3)$ (with $t=x/z$), which is
smooth over $\ZZ_p$ by \cite{Liu}, IV.3.30 and IV.3.35.
Nevertheless, it is well known that not all finite extensions of
$\Q_p(t)$ are function fields of smooth curves over $\Z_p$, as we will
indicate; we do not consider such fields in this paper.

In his Ph.D. thesis (see \cite{Feng}), Feng Chen has constructed an
index preserving homomorphism $\Br(K\widehat (X)) \to \Br(K(X))$ over
function fields of connected smooth curves, this time over an
arbitrary complete discrete valuation ring.  Chen's approach is quite
different from ours, building on patching techniques developed by
Harbater and Hartmann \cite{HH}.  We believe that both methods are of
interest, and may in the future complement each other in the study of
division algebras over function fields of curves over complete rings.
Finally, we mention that it should be possible to transfer
Hanke-Sonn's comprehensive analysis of noncrossed products in
\cite{HS} to our situation.

We would like to thank the referee for pointing out an
  error in the initial version of this paper, and for his many
  suggestions that helped improve the exposition.

\medskip
{\bf Notation.}  Throughout this paper we let $(c)$ denote the image
of $c\in K^*$ in $\H^1(K,\mu_n)$.  In general we write $a.b$ for the
cup product of cohomology classes $a$ and $b$, unless
$a\in\H^1(K,\Q/\Z)$ and $b=(c)$, in which case for historical reasons
we write $$(a,c)=a.(c)\in\Br(K).$$


\section{Tamely ramified covers of smooth curves}
In this section we review some facts about smooth curves over complete
discrete valuation rings and tamely ramified covers of them.  

\subsection{Smooth Curves and Marks}
Let $R$ be a noetherian ring.  By a \emph{smooth curve $X$ over $R$}
we mean a scheme $X$ which is projective and smooth of relative
dimension~1 over $\Spec R$.  In particular, $X$ is flat and of finite
presentation over $\Spec R$.

By a \emph{mark} $D$ on $X$ we mean an effective \'etale-relative
Cartier divisor $D$ on $X$, that is, a closed subscheme of $X$
that is \'etale over $\Spec R$ and whose defining ideal is invertible 
as an $\sheaf{O}_X$-module.

Note that the definitions of smooth curves, effective relative Cartier
divisors, and marks are stable under arbitrary base change (see
\cite{EGAIVd} 17.3.3 (iii), \cite{EGAII} 5.5.5 (iii), and
\cite{KatzMazur} 1.1.4).

In this paper we work with smooth curves over complete discrete
valuation rings.  In the next lemma we collect some useful facts about
them.

\begin{lemma}
  Let $(R, \ideal{m}, k)$ be a complete discrete valuation ring with
  maximal ideal $\ideal{m}$, residue field $k = R/\ideal{m}$, and
  field of fractions $K = \Frac R$.  Let $X$ be a smooth curve over
  $R$ and write $X_0 \df = X \times_{\Spec R} \Spec k$ for its special
  fiber (a smooth curve over $k$).  For any effective relative Cartier
  divisor $D$ on $X$, denote its restriction to $X_0$ by $D_0 \df = D
  \times_{\Spec R} \Spec k$. \label{marklemma}
  \begin{enumerate}
    \def\theenumi{\roman{enumi}}
  \item Both $X$ and $X_0$ are regular.

  \item $X$ is connected if and only if $X_0$ is connected.
    
  \item Any effective relative Cartier divisor $D$ on $X$ is finite
    over $\Spec R$.  In particular, we may write $D = \Spec S$ where
    $S$ is a product of finite free local $R$-algebras.
  
  \item Let $D$ be an effective relative Cartier divisor $D$ on $X$.
    Then
    $$D \text{ is a mark on }X\iff
    D_0 \text{ is a mark on }X_0
    $$

  \item Let $D$ be a mark on $X$.  Then
    $$D \text{ is irreducible}\iff
    D \text{ is integral}\iff
    D \text{ is connected}
    $$
    Hence there is a 1-1 correspondence between irreducible components
    of a mark $D$ and those of $D_0$, and in particular, if $D$ is an
    integral mark, then so is $D_0$.

  \item If $D$ is an integral mark then $[K(D):K]=[k(D_0):k]$ where
    $K(D)$ and $k(D_0)$ denote the function fields of $D$ and $D_0$
    respectively.

  \item Any irreducible effective Cartier divisor on $X$ other than
    the irreducible components of $X_0$ is relative.  Moreover any
    mark $D_0$ on $X_0$ lifts to a mark $D$ on $X$.
  \end{enumerate}
\end{lemma}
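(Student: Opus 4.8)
The plan is to establish the seven assertions in turn, using throughout that $R$ is complete (hence Henselian) and that $X\to\Spec R$ is proper, flat, and smooth of relative dimension $1$. For (i), regularity is inherited along smooth morphisms from a regular base: $R$ is regular (a DVR) and $k$ is a field, so smoothness of $X/R$ and of $X_0/k$ forces $X$ and $X_0$ to be regular. For (ii), I would use that completeness makes idempotents lift: by Stein factorization the finite $R$-algebra $\H^0(X,\O_X)$ has its idempotents in bijection with those of $\H^0(X_0,\O_{X_0})$ (Henselian idempotent lifting together with the theorem on formal functions), and since connected components correspond to idempotents, $X$ is connected iff $X_0$ is. For (iii), $D$ is closed in the proper $R$-scheme $X$ and has finite fibers, being a Cartier divisor of relative dimension $0$; hence $D\to\Spec R$ is proper and quasi-finite, therefore finite. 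A finite flat algebra over the Henselian local ring $R$ splits as a product of finite local $R$-algebras, and each such factor, being finite and torsion-free over the DVR $R$, is free.

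For (iv), the content (since $D$ is already finite flat over $R$ by (iii)) is that $D\to\Spec R$ is étale iff $D_0\to\Spec k$ is. The étale locus $U\subseteq D$ is open; if $D_0$ is étale then $U\supseteq D_0$, and as $D\smallsetminus U$ is closed in the proper $R$-scheme $D$, its image in $\Spec R$ is closed and avoids the unique closed point, hence is empty, so $D=U$ is étale. The converse is stability of étaleness under base change. For (v), write the mark as $D=\Spec\prod_i S_i$ with each $S_i$ a finite étale \emph{local} $R$-algebra; since $R$ is Henselian, each $S_i$ is a discrete valuation ring unramified over $R$, in particular a domain, so every connected component of $D$ is automatically integral. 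Thus "connected", "irreducible", and "integral" coincide for $D$, and the factors $S_i$ match the components of $D_0$, whose coordinate rings $S_i\otimes_R k$ are again fields. For (vi), an integral mark is $D=\Spec S_1$ with $S_1$ free of some rank $n$ over $R$, whence $[K(D):K]=\dim_K(S_1\otimes_R K)=n=\dim_k(S_1\otimes_R k)=[k(D_0):k]$.

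For (vii), the first assertion is quick: an irreducible effective Cartier divisor $Z$ that is not a component of $X_0$ is not contained in $X_0$, so as an integral scheme it dominates $\Spec R$; being torsion-free over the DVR $R$ it is flat, hence relative. The lifting of marks is the \textbf{main obstacle}, and I would treat it by deformation theory and algebraization. A mark $D_0$ is a smooth (indeed finite étale) closed subscheme of the smooth $R$-scheme $X$, so the obstruction to lifting it flatly along each square-zero step $\Spec R/\ideal{m}^{n}\hookrightarrow\Spec R/\ideal{m}^{n+1}$ lies in $\H^1(D_0, N_{D_0/X_0}\otimes_k \ideal{m}^{n}/\ideal{m}^{n+1})$, which vanishes because $D_0$ is finite over $k$, hence affine of dimension $0$. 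This produces a compatible system of relative Cartier divisors $D_n\subset X\times_R R/\ideal{m}^{n}$, that is, a formal divisor; since $X$ is proper over the complete ring $R$, Grothendieck's existence theorem algebraizes it to a closed subscheme $D\subset X$ flat over $R$ with $D\times_R k=D_0$. Flatness together with the special fiber being an effective Cartier divisor makes $D$ a relative effective Cartier divisor, and then (iv) upgrades it to a mark. The reason (vii) is the crux is that every other part is either a standard permanence property (of smoothness, étaleness, or flatness under base change) or a consequence of $\Spec R$ having a single closed point, whereas lifting a mark genuinely requires deformation theory combined with algebraization over the complete base.
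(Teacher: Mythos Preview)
Your proof is correct and follows the same route as the paper, which proceeds mostly by citation (EGA~IV, SGA~1, Katz--Mazur, Liu) for each of the seven items; in particular your arguments for (i)--(vi) are the explicit content of the references the paper invokes. The one place you elaborate beyond the paper is the lifting of marks in (vii): you spell out the deformation-plus-Grothendieck-existence argument, whereas the paper simply cites Liu VIII.3.35 and EGA~IV 21.9.11--12, which package exactly that reasoning.
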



\begin{proof}
  Since $X$ and $X_0$ are smooth over $\Spec R$ and $\Spec k$
  respectively, (i) follows from \cite{EGAIVd} 17.5.8~(iii).  On the
  other hand (ii) is just a special case of \cite{EGAIVd} 18.5.19.
  
  The structure map $D\to \Spec R$ is proper as the composition of the closed
  immersion $D \into X$ and the projective morphism $X \to \Spec R$, so
  the first assertion of (iii) follows from \cite{KatzMazur} 1.2.3.
  The second assertion follows from the fact that (by definition)
  finite morphisms are affine, that any finite algebra $S$ over a
  henselian ring $R$ is a product of finite local $R$-algebras (see
  \cite{Milne} I.4.2~(b)), and that a finitely generated module over a
  local ring is flat if and only if it is free (see \cite{Matsumura}
  7.10).  This proves (iii).

  To prove (iv) we may assume by (iii) that $D = \Spec S$ for some
  finite free (hence flat) local $R$-algebra $S$, and it remains to
  show that $S$ is unramified over $R$ if and only if $S\tensor_R k$
  is unramified over $k$.  This follows from \cite{EGAIVd}
  17.4.1~(a),(d) since $S$, being a local ring, is unramified over $R$
  if and only if it is unramified over $R$ at its maximal ideal
  (c.f. ibid, D\'efinition 17.3.7).

  To prove (v), first observe that if $D$ is a mark, then it is
  reduced by \cite{SGAI} I.9.2 since $R$ is a domain.  Hence a mark is
  irreducible if and only if it is integral.  Clearly if $D$ is
  irreducible then it must be connected; conversely, since $D\to \Spec
  R$ is \'etale and $R$ is normal, $D$ is also normal (\cite{SGAI}
  I.9.10), hence if $D$ is connected it must be irreducible.
  Therefore connected and irreducible components of $D$ agree, and
  since $D \to \Spec R$ is proper and $R$ is henselian the rest of (v)
  follows directly from \cite{EGAIVd} 18.5.19 (or \cite{Milne} I.4.2).

  To prove (vi), write $D = \Spec S$ for some finite free local
  $R$-algebra $S$ using (iii).  Note that $[S \tensor_R K: K] = [S
  \tensor_R k: k]$ equals the rank of $S$ over $R$, hence it is enough
  to show that $S \tensor_R K = K(D)$ and $S \tensor_R k = k(D_0)$.
  Since $S$ is \'etale over $R$, $\ideal{m}S$ is the maximal ideal of
  $S$ and $S/\ideal{m} S = S \tensor_R k = k(D_0)$ is its residue
  field; on the other hand, $S \tensor_R K \subset \Frac S$ is
  a localization of $S$ that contains $S$ and is \'etale over $K$,
  hence we must have $S \tensor_R K = \Frac S = K(D)$.

  Finally the first fact in (vii) follows from \cite{Liu} IV.3.10.
  The second assertion is then a consequence of (iv) and
  \cite{Liu} VIII.3.35 (see also \cite{EGAIVd} 21.9.11~(i) and
    21.9.12).
\end{proof}

\subsection{Tamely ramified covers}
Let $K$ be a field and $v\colon K\to \ZZ\cup \{ \infty \}$ be a
discrete valuation with residue field of characteristic $p$.  Let $L/K$ be a finite separable field extension and $L'$ be the
Galois closure of $L$ in some separable closure of $K$ containing
$L$.  Let $\{w_i\}$ be the discrete
valuations of $L'$ extending $v$ and denote by $I_i$ their inertia
groups (see \cite{Bourbaki} V.2.3 or \cite{Lang} VII.2).  Recall that
$L/K$ is said to be \emph{tamely ramified} with respect to $v$
if $p$ does not divide $|I_i|$ for all $i$.

Let $X$ be an integral smooth curve over a complete discrete
  valuation ring $(R, \ideal{m}, k)$.  By Lemma~\ref{marklemma}~(i)
$X$ is regular, hence normal, so that each irreducible effective
Weil (or Cartier) divisor $E$ defines a discrete valuation on the
function field $K(X)$ of $X$, which we will denote by $v_E$.  Now let
$D$ be a mark on $X$ and $\rho\colon Y \to X$ be a finite $(\Spec
R)$-morphism of integral smooth curves over $R$.  We say that $\rho$
is a \emph{tamely ramified cover} of the pair $(X, D)$ if it is
\'etale over $X - D$ and tamely ramified along $D$, that is, the
function field $K(Y)$ of $Y$ is a tamely ramified extension of the
function field $K(X)$ of $X$ with respect to the valuations defined by
irreducible components of $D$.  \'Etale locally, tamely ramified
covers have the following description (see \cite{Wewers} 2.3.4 and
\cite{FreitagKiehl} A.I.11): for each geometric closed point $y\colon
\Spec \Omega\to Y$ with image $x = \rho \circ y\colon \Spec \Omega \to
X$ there exist affine \'etale neighborhoods $\Spec B \to Y$ and $\Spec
A \to X$ of $y$ and $x$ such that $B = A[w]/(w^n - z)$ for some $z \in
A$ (an \'etale local coordinate of $D$) and some integer $n$ prime to
the characteristic of $k$.

\begin{lemma} Let $X$ be an integral smooth curve over a complete
  discrete valuation ring $(R, \ideal{m}, k)$, and $D$ be a mark on
  $X$.  Let $\rho\colon Y \to X$ be a tamely ramified cover of $(X,
  D)$.  Let $E$ be a mark on $X$ such that either $E \cap D =
  \emptyset$ or $E \subset D$.  Then \label{specialval}
  \begin{enumerate}
    \def\theenumi{\roman{enumi}}

  \item $Y$ is flat over $X$ and equals the normalization of $X$ in
    $K(Y)$;

  \item $(\rho^{-1} E)_{\rm red}$ is a mark;

  \item if $E$ is irreducible and $F$ is an
    irreducible mark on $Y$ lying over $E$, then the
    ramification (resp. the inertia) degree of $v_F$ over $v_E$ equals
    the ramification (resp. inertia) degree of $v_{F_0}$ over
    $v_{E_0}$.
  \end{enumerate}
\end{lemma}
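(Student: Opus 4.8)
The plan is to treat the three parts in turn, using the regularity of smooth curves for (i), the flatness just established together with the \'etale-local structure of tame covers for (ii), and Lemma~\ref{marklemma}~(vi) plus that same local model for (iii).

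For (i), since $Y$ is itself an integral smooth curve over $R$, Lemma~\ref{marklemma}~(i) shows that $Y$ is regular, hence normal. As $\rho$ is finite and dominant with $K(Y)$ as residue field at the generic point, $\mathcal{O}_Y$ is integral over $\mathcal{O}_X$, contained in $K(Y)$, and integrally closed, so it coincides with the integral closure of $\mathcal{O}_X$ in $K(Y)$; thus $Y$ is the normalization of $X$ in $K(Y)$. For flatness I would invoke miracle flatness: both $X$ and $Y$ are regular (hence Cohen--Macaulay) and integral of dimension $2$, and $\rho$ is finite, so its fibers are equidimensional of dimension $0 = \dim Y - \dim X$; therefore $\rho$ is flat.

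For (ii) I would split into the two allowed cases. If $E\cap D=\emptyset$ then $\rho$ is \'etale over $E$, so $\rho^{-1}E\to E\to\Spec R$ is \'etale; being \'etale over the reduced base $\Spec R$, $\rho^{-1}E$ is already reduced, and since $\rho$ is flat by (i) the inverse-image ideal $\mathcal{I}_E\cdot\mathcal{O}_Y$ is invertible, so $(\rho^{-1}E)_{\mathrm{red}}=\rho^{-1}E$ is an effective Cartier divisor \'etale over $\Spec R$, i.e. a mark. If $E\subset D$, I would use the \'etale-local description $B=A[w]/(w^n-z)$ with $z$ a local equation for $E$; there $\rho^{-1}E$ is cut out by $w^n$, so $(\rho^{-1}E)_{\mathrm{red}}$ is cut out by the nonzerodivisor $w$, and $B/(w)\cong A/(z)$ recovers $E$ locally, which is \'etale over $\Spec R$. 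Hence $(\rho^{-1}E)_{\mathrm{red}}$ is a mark in either case.

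For the inertia assertion in (iii), observe that $F$ maps to $E$ and $F_0$ to $E_0$, inducing $K(E)\hookrightarrow K(F)$ and $k(E_0)\hookrightarrow k(F_0)$, with inertia degrees $[K(F):K(E)]$ and $[k(F_0):k(E_0)]$. Applying Lemma~\ref{marklemma}~(vi) to the integral marks $E$ and $F$ gives $[K(E):K]=[k(E_0):k]$ and $[K(F):K]=[k(F_0):k]$, and dividing yields $[K(F):K(E)]=[k(F_0):k(E_0)]$, as desired. For the ramification assertion I would again use the local model $B=A[w]/(w^n-z)$, where $z$ is a uniformizer for $v_E$ and $w$ one for $v_F$: from $z=w^n$ one reads off $v_F(z)=n$, so the ramification degree of $v_F$ over $v_E$ equals $n$. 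Reducing modulo $\mathfrak{m}$ turns this model into $B_0=A_0[w]/(w^n-\bar z)$ with the \emph{same} exponent $n$, and the identical computation gives ramification degree $n$ for $v_{F_0}$ over $v_{E_0}$; hence the two ramification degrees agree. The main obstacle I anticipate is making this last step fully rigorous: one must match the \'etale-local structure theorem (phrased via geometric points and \'etale neighborhoods) with the actual valuations $v_E,v_F$ and their special-fiber counterparts, check that reduction to $X_0$ carries the reduced inverse image and the local Kummer coordinate to the corresponding objects downstairs, and confirm that the exponent $n$ really is the ramification index in all four cases.
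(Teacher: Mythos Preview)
Your proposal is correct and follows the same overall architecture as the paper's proof: (i) via regularity/normality, (ii) by cases according to whether $E$ meets $D$, and (iii) using Lemma~\ref{marklemma}~(vi) for inertia and the local Kummer model for ramification. Two points of comparison are worth noting.

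For (i), you use miracle flatness, whereas the paper first observes that the restriction $\rho_0\colon Y_0\to X_0$ is a finite generically \'etale map between smooth curves over a field, hence flat, and then lifts this to flatness of $\rho$ via the local criterion of flatness. Both are valid; miracle flatness is arguably quicker here.

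For (ii) with $E\subset D$ and for the ramification part of (iii), you work directly with the \'etale-local model $B=A[w]/(w^n-z)$ and implicitly descend the conclusions. The paper carries this out more carefully: it works with the Zariski local rings $A=\mathcal O_{X,x_0}$ and $B=\mathcal O_{Y,y_0}$, passes to strict henselizations $A_{sh}$ and $B_{sh}=A_{sh}[T]/(T^e-z)$, and uses faithful flatness of $B\to B_{sh}$ to show there is a single prime $(w)$ in $B$ over $(z)$ and that its image $w_0$ is a uniformizer of $B_0=\mathcal O_{Y_0,y_0}$. This strict-henselization argument is exactly the mechanism needed to resolve the ``matching'' obstacle you flag at the end, turning the \'etale-neighborhood statement into one about the actual local rings and hence about $v_E,v_F,v_{E_0},v_{F_0}$. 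Your route is not wrong---the relevant properties do descend---but the paper's version makes this passage explicit. One small omission: for the ramification part of (iii) you should also dispose of the case $E\cap D=\emptyset$, where $F\to E$ and $F_0\to E_0$ are \'etale and both ramification degrees are $1$.
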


\begin{proof} The restriction $\rho_0\colon Y_0 \to X_0$ of $\rho$ to the
  special fibers is a finite generically \'etale map between smooth
  curves over a field, which is flat by \cite{SGAI} IV.1.3~(ii) for
  instance.  Hence $\rho$ is also flat by the local criterion of
  flatness (see \cite{SGAI} IV.5.9).  To finish the proof of (i), note
  that $Y$ is regular (Lemma~\ref{marklemma}~(i)) and thus normal, and
  since it is also integral over $X$, it equals the normalization of
  $X$ in $K(Y)$.

  To prove (ii), assume first that $E \cap D = \emptyset$.  Since
  $\rho^{-1} (X-D) \to X - D$ is \'etale by assumption, 
  $\rho^{-1}E \to E$ is \'etale by base change.  Therefore since $E$ is reduced,
  $\rho^{-1}E$ is reduced (\cite{SGAI} I.9.2), and since $E\to\Spec R$ is already \'etale, the
  composition $\rho^{-1}E \to E\to \Spec R$ is \'etale, hence
  $(\rho^{-1} E)_{\rm red} = \rho^{-1}E$ is a mark.

  Now suppose that $E\subset D$; we may assume without loss of
  generality that $E$ is a connected and hence irreducible
  component of $D$ (see Lemma~\ref{marklemma}~(v)).  We first
    show that each connected component of $\rho^{-1}E$ is irreducible.
    We start by understanding the situation locally.

Let
  $x_0$ be the closed point of $E$ and $y_0\in Y_0$ be such that
  $\rho(y_0) = x_0$.  Write $A = \sheaf{O}_{X, x_0}$ and $B =
  \sheaf{O}_{Y, y_0}$; both are 2-dimensional noetherian
  regular local rings and hence also factorial domains by Auslander-Buchsbaum's theorem.  We
  have an exact sequence
  $$0\rTo \sheaf{I}_{E, x_0} \rTo \sheaf{O}_{X, x_0} \rTo
  \sheaf{O}_{E, x_0} \rTo 0
  $$
  which can be rewritten as
  $$0 \rTo A \rTo^z A \rTo A/(z) \rTo 0$$
  where $z\in A$ is a prime element defining $E$ so that $E = \Spec
  A/(z)$ (recall that $E$ is a local affine scheme by
  Lemma~\ref{marklemma}~(iii)).  
Since $A/(z)$ is \'etale over the complete discrete valuation ring $R$, 
$A/(z)$ is normal, hence it is a complete discrete valuation ring,
and it follows that $z$ is part of a regular system of parameters of $A$, and that $(z)+\frak m A$ is the
maximal ideal of $A$.
Write $A_0 \df= \sheaf{O}_{X_0, x_0}=A\otimes_R k
  = A/\ideal{m}A$, and let $z_0$ be the image of $z$ in $A_0$.
  Then $A_0=(A_0,(z_0),k)$ is a discrete valuation ring whose maximal ideal $(z_0)$ defines $x_0=E_0$.

The closed subscheme $\rho^{-1}E$ of $Y$ is a relative Cartier
divisor by flat pull-back (see \cite{KatzMazur}, 1.1.4), hence by
Lemma~\ref{marklemma}~(iii) there is a 1-1 correspondence between the
connected components of $\rho^{-1}E$ and its closed points.
Showing that the connected component of $\rho^{-1}E$ going through
  $y_0$ is irreducible is now equivalent to
  showing that $z$ is divisible by a single prime factor $w$ in $B$,
  so that it can be written as $z = u \cdot w^e$, $u\in B^\unit$.
  This follows from the \'etale local description of $\rho\colon Y \to
  X$ (see \cite{Wewers} 2.3.4 and \cite{FreitagKiehl}, I.3.2
    and A.I.11): since $A$ is a regular local ring, $B$ is the
    localization of the normalization of $A$ in $K(Y)$ with respect to
    one of its maximal ideals, and $B$ is tamely ramified just along
    $z$, for some integer $e$ prime to $\chara k$ we have a commutative diagram 
  \begin{diagram}
    B&\rTo & B_{sh} \rlap{${}= \dfrac{A_{sh}[T]}{(T^e - z)}$}\\
    \uTo&&\uTo\\
    A&\rTo& A_{sh}
  \end{diagram}
where $A_{sh}$ denotes the strict henselization of $A$, and $B_{sh}$ that of $B$.
Observe that all four maps are faithfully flat (see
    \cite{EGAIVd} 18.8.8~(iii)), and that all four rings are noetherian
    regular local rings, and thus factorial domains.  In particular, the associated
    maps between spectra are surjective (see \cite{Milne} I.2.7~(c))
    and preserve height~1 prime ideals (by \cite{Matsumura} 15.1), all
    of which are principal (ibid. 20.1).  Hence in order to show that
  there is a single prime $(w)$ in $B$ lying over $(z)$, it is enough
  to show that there is a single prime in $B_{sh}$ lying over $(z)$.
  Notice that $z$ stays prime in $A_{sh}$.  For as shown above, $z$ is part of a regular system of parameters of $A$.  Since $A_{sh}$ is
  unramified over $A$, the maximal ideal of $A_{sh}$ is
    generated by the maximal ideal of $A$, and therefore $z$ is also
  part of a regular system of parameters of $A_{sh}$, which implies
  that $z$ is prime in $A_{sh}$.
The only prime in $B_{sh}$
  lying above $(z)$ is $(\overline T)$, since the fiber $\Spec
  B_{sh}\tensor_{A_{sh}} \kappa(z) = \Spec \kappa(z)[T]/(T^e)$ consists of a
  single point (here $\kappa(z) = \Frac A/(z)$ denotes the residue
  field of the prime $(z)$).  The image of $(\overline T)$ in $\Spec
  B$ is the unique prime $(w)$ lying over $(z)$, and since $B_{sh}$ is unramified over $B$,
   $(w)B_{sh} = (\overline T)$ in $B_{sh}$.
This completes the proof that each connected component of $\rho^{-1}E$ is irreducible.

It remains to show that each connected (irreducible) component of $(\rho^{-1}E)_{\rm red}$ is a mark.
  Write $B_0 \df = \sheaf{O}_{Y_0, y_0} = B \tensor_R k =
  B/\ideal{m}B$ and let $w_0$ be the image of $w$ in $B_0$.  By Lemma
  \ref{marklemma}~(iv), to show that the connected component $\Spec
  B/(w)$ of $(\rho^{-1}E)_{\rm red}$ is a mark, it is enough to show
  that its restriction $\Spec B_0/(w_0)$ to the special fiber is a
  mark, i.e., that $w_0$ is a uniformizer of the discrete valuation
  ring $B_0$.  Now observe that the extension of the ideal $\ideal{b} \df = (w) +
  \ideal{m}B$ of $B$ to $B_{sh}$ is the maximal ideal $\ideal{b} B_{sh} = (\overline T) +
  \ideal{m}B_{sh}$ of $B_{sh}$: in fact, by direct computation we have
  an isomorphism
  $$\dfrac{B_{sh}}{(\overline T) + \ideal{m}B_{sh}} =
  \dfrac{A_{sh}}{(z) + \ideal{m}A_{sh}}
  $$
  and since $(z) + \ideal{m}A$ is the maximal ideal of $A$, $(z) +
  \ideal{m}A_{sh}$ is the maximal ideal of $A_{sh}$.  On the other
  hand, $B_{sh}$ is faithfully flat over $B$, hence $\ideal{b}$ must
  be the maximal ideal of $B$ (see \cite{Matsumura} 7.5~(ii)).
  Therefore $(w_0)$, the image of $\ideal{b}$ in $B_0$, is the maximal
  ideal of $B_0$, as was to be shown.

  We now prove (iii).  Note that $E_0$ and $F_0$ are irreducible marks
  by Lemma \ref{marklemma}(v) so that $v_{E_0}$ and $v_{F_0}$ are
  well-defined.  Denoting $K = \Frac R$, and by $K(F)$, $K(E)$,
  $k(F_0)$, $k(E_0)$ the function fields of $F$, $E$, $F_0$, $E_0$, we
  have by Lemma~\ref{marklemma}~(vi) that
  $$[K(F): K(E)] = \frac{[K(F) : K]}{[K(E): K]}
                 = \frac{[k(F_0) : k]}{[k(E_0): k]}
                 = [k(F_0): k(E_0)]$$
  showing that the inertia degree of $v_F$ over $v_E$ equals that of
  $v_{F_0}$ over $v_{E_0}$.

  To show equality of ramification degrees, we keep the notation in
  the proof of (ii).  If $E\cap D = \emptyset$, then $F\to E$ and
  $F_0\to E_0$ are both \'etale, so the ramification degree is 1 in
  both cases.  Now assume that $E \subset D$; observe that $z$ and $w$
  are uniformizers of $v_E$ and $v_F$, respectively.  We showed above
  that $(z_0)$ is the maximal ideal of the discrete valuation ring
  $A_0$, i.e., $z_0$ is a uniformizer of $v_{E_0}$, and similarly
  $w_0$ is a uniformizer of $v_{F_0}$.  Since $z=w^e \cdot u$, where
  $u\in B^\unit$ and $e$ is the ramification degree of $v_F$ over
  $v_E$, we have $z_0 = w_0^e \cdot u_0$, where $u_0$ is the image of $u$ in $B_0^\unit$, 
  and thus $v_{F_0}(z_0) = e$ as desired.
\end{proof}

\subsection{An equivalence of categories}
Let $(R, \ideal{m}, k)$ be a complete discrete valuation ring,
$X$ be a smooth integral curve over $R$, and $D$ be a mark on $X$.  We
write $\Rev^D_R(X)$ for the category whose objects are the tamely
ramified covers of $(X, D)$ and whose arrows are the $X$-morphisms.
We have a restriction functor $\Rev^D_R(X) \to
\Rev^{D_0}_k(X_0)$ taking a tamely ramified cover $Y$ of $(X, D)$ to
the tamely ramified cover $Y_0$ of $(X_0, D_0)$, 
and a map $f\colon Y \to Z$ to its restriction $f_0 \df = f
\times_{\Spec R} \Spec k\colon Y_0 \to Z_0$ to the special fibers.
Observe that by Lemma~\ref{marklemma}~(i) and the definition of tamely
ramified cover all objects in $\Rev^D_R(X)$ and $\Rev^{D_0}_k(X_0)$
are regular schemes.

Amazingly, this functor $\Rev^D_R(X) \to \Rev^{D_0}_k(X_0)$ is an
equivalence of categories (see \cite{Wewers} 3.1.3 for the proof):

\begin{theorem} \label{equivcat} (Grothendieck) Let $(R, \ideal{m},
  k)$ be a complete discrete valuation ring, $X$ be a smooth
  integral curve over $R$, and $D$ be a mark on $X$.  Then restriction
  to the special fibers gives an equivalence of categories
  $$\Rev^D_R(X) \buildrel \approx \over \to \Rev^{D_0}_k(X_0)$$
\end{theorem}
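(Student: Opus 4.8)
The plan is to prove that the restriction functor is fully faithful and essentially surjective by reducing to the case of finite \'etale covers, which is classical, and then accounting for the tame ramification along $D$ by a Kummer (Abhyankar) argument. Set $R_n = R/\ideal{m}^{n+1}$, so that $R_0 = k$, and write $X_n = X\times_{\Spec R}\Spec R_n$ and $D_n = D\times_{\Spec R}\Spec R_n$ for the corresponding thickenings of the special fibre, with $\Rev^{D_n}_{R_n}(X_n)$ denoting the evident extension of the definition of tamely ramified cover to the Artinian base $R_n$. I would factor the restriction functor as
\[
\Rev^D_R(X)\ \xrightarrow{\ \alpha\ }\ \varprojlim_n \Rev^{D_n}_{R_n}(X_n)\ \xrightarrow{\ \beta\ }\ \Rev^{D_0}_k(X_0),
\]
where $\alpha$ sends $Y$ to the compatible system $(Y_n)_n$ of its reductions and $\beta$ evaluates at $n=0$, and then show separately that $\alpha$ and $\beta$ are equivalences.

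First I would dispose of the \'etale case $D=\emptyset$. That $\beta$ is an equivalence here is the topological invariance of the small \'etale site: each closed immersion $X_n\hookrightarrow X_{n+1}$ is nilpotent, so pullback identifies the finite \'etale covers of $X_{n+1}$ with those of $X_n$ compatibly in $n$ (\cite{SGAI} I.8, \cite{EGAIVd} 18.1.2), whence the inverse limit maps isomorphically onto the $n=0$ term. That $\alpha$ is an equivalence is Grothendieck's existence theorem (formal GAGA): since $X$ is proper over the complete local ring $R$, a finite \'etale cover of the formal completion $\widehat X$---equivalently, a compatible system $(Y_n)_n$---algebraizes to a unique finite \'etale cover $Y\to X$, and the $\Hom$-sets are computed identically. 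Properness of $X$ over $R$, together with completeness of $R$, is exactly what powers this algebraization, and is where the standing hypotheses are used essentially.

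The main new input is the ramification along $D$, and this is where I expect the real difficulty to lie. Here I would invoke the \'etale-local normal form recorded just before the statement: near a point of $D$ a tame cover is of the form $B = A[w]/(w^n-z)$ with $z$ a local equation of $D$ and $n$ prime to $\chara k$. Choosing $n$ divisible by all the ramification indices that occur and extracting an $n$-th root of the local equation of $D$, Abhyankar's lemma makes the pullback of any tame cover \'etale; thus a tame cover of $(X,D)$ is equivalent to a finite \'etale cover of the Kummer base change equipped with $\mu_n$-descent data. Because $n$ is prime to the residue characteristic, both the auxiliary Kummer cover and its $\mu_n$-action are controlled by the already-established \'etale equivalence and are insensitive to the infinitesimal thickenings, so the tame structure transports in both directions. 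Lemma~\ref{specialval} supplies the geometric bookkeeping that keeps this honest: it guarantees that any cover produced by the lift is flat, equals the normalization of $X$ in its function field, and has $(\rho^{-1}D)_{\rm red}$ a mark whose ramification matches that prescribed on the special fibre.

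Finally I would assemble the two halves. Essential surjectivity follows by lifting a given tame cover $Y_0\to X_0$ through each thickening $X_n$ (untwisting by the Kummer cover to reduce to the \'etale lifting, then re-twisting) and algebraizing via $\alpha$, with Lemma~\ref{specialval}(ii),(iii) confirming that the result is a tame cover of $(X,D)$ with the correct inertia. Full faithfulness follows because an $X$-morphism $Y\to Z$ between tame covers is encoded by a connected component of $Y\times_X Z$ that maps isomorphically to $Y$, i.e.\ by a further tame cover datum, to which the same thickening-plus-algebraization equivalence applies; hence $\Hom_X(Y,Z)\to\Hom_{X_0}(Y_0,Z_0)$ is a bijection. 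The hardest point throughout is to check that tameness, and nothing stronger, is preserved under the lift---that neither the $\mu_n$-descent nor the algebraization introduces wild ramification or spurious components along $D$---which is precisely what the factoriality and \'etale-local computations behind Lemma~\ref{specialval} are arranged to guarantee; the full verification is \cite{Wewers} 3.1.3.
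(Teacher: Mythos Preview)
The paper does not actually prove this theorem: it simply records the statement and refers the reader to \cite{Wewers}~3.1.3 for the proof. Your sketch---factoring through the formal scheme, invoking topological invariance of the \'etale site for the infinitesimal thickenings, Grothendieck's existence theorem for algebraization, and handling the tame ramification via Abhyankar's lemma/Kummer descent---is exactly the strategy underlying that reference, and you close with the same citation; so your proposal is consistent with (and more detailed than) what the paper itself provides.
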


For any scheme $X$ and effective Cartier divisor $D$ we write
$\pi_1^t(X, D, \overline x)$ for the \emph{tame fundamental group of
$X$ with respect to $D$} with geometric base point $\overline x\colon \Spec \Omega
\to X-D$ (see \cite{Wewers} 4.1.2, \cite{SGAI} XIII.2.1.3 or
\cite{FreitagKiehl} A.I.13).  By definition, $\pi_1^t(X, D, \overline
x)$ classifies pointed tamely ramified covers of $(X, D)$, and thus we
obtain the following (c.f. \cite{SGAI} X.2.1)

\begin{corollary} With the notation and hypotheses of the previous
  theorem, let $\overline x_0$ be a geometric point of $X_0 - D_0$.
  Then the natural map \label{tamepicorollary}
  $$\pi_1^t(X_0, D_0, \overline x_0) \buildrel \approx \over\to
    \pi_1^t(X, D, \overline x_0)$$
  of tame fundamental groups is an isomorphism.
\end{corollary}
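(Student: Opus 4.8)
The plan is to deduce the corollary formally from the categorical equivalence of Theorem~\ref{equivcat}, exactly as Grothendieck deduces the analogous comparison of \'etale fundamental groups in \cite{SGAI}~X.2.1. The point is that the tame fundamental group depends on the category of tame covers only through an attached fiber functor, so an equivalence of categories carrying one fiber functor to the other must induce an isomorphism of fundamental groups.

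First I would recall the definition of the tame fundamental group as the automorphism group of a fiber functor. Fix the geometric point $\overline x_0\colon \Spec \Omega\to X_0-D_0$. By definition $\pi_1^t(X_0,D_0,\overline x_0)=\mathop{\mathrm{Aut}}(F_0)$, where $F_0\colon \Rev^{D_0}_k(X_0)\to(\text{finite sets})$ is the fiber functor $Y_0\mapsto \Hom_{X_0}(\overline x_0,Y_0)$ sending a tame cover to its geometric fiber over $\overline x_0$. Since the closed immersion $X_0\into X$ carries $X_0-D_0$ into $X-D$, the same $\overline x_0$ is a geometric point of $X-D$, and likewise $\pi_1^t(X,D,\overline x_0)=\mathop{\mathrm{Aut}}(F)$ for the fiber functor $F\colon \Rev^D_R(X)\to(\text{finite sets})$, $Y\mapsto \Hom_X(\overline x_0,Y)$.

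The key step is to check that these two fiber functors are identified under the equivalence. If $\Phi\colon \Rev^D_R(X)\to\Rev^{D_0}_k(X_0)$, $Y\mapsto Y_0=Y\times_{\Spec R}\Spec k$, denotes the restriction functor of Theorem~\ref{equivcat}, I claim there is a natural isomorphism $F\cong F_0\circ\Phi$. This is a base-change identity: writing $Y_0=Y\times_X X_0$ and using that $\overline x_0$ factors through the closed immersion $X_0\into X$, one has
$$Y_0\times_{X_0}\overline x_0=(Y\times_X X_0)\times_{X_0}\overline x_0=Y\times_X\overline x_0,$$
so the geometric fibers $F_0(\Phi(Y))=\Hom_{X_0}(\overline x_0,Y_0)$ and $F(Y)=\Hom_X(\overline x_0,Y)$ coincide naturally in $Y$.

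Finally I would invoke the general formalism of Galois categories (see \cite{SGAI}~V): an equivalence of Galois categories compatible with the chosen fiber functors induces an isomorphism of their automorphism groups. Since Theorem~\ref{equivcat} provides the equivalence $\Phi$ and the previous step provides the compatibility $F\cong F_0\circ\Phi$, the induced homomorphism
$$\pi_1^t(X_0,D_0,\overline x_0)=\mathop{\mathrm{Aut}}(F_0)\to\mathop{\mathrm{Aut}}(F)=\pi_1^t(X,D,\overline x_0)$$
is an isomorphism. The only genuine point to verify is the fiber-functor compatibility of the middle step; everything else is the standard categorical translation of an equivalence of Galois categories into an isomorphism of fundamental groups, and I expect no serious obstacle once the base points have been matched via $X_0\into X$.
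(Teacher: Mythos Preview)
Your proposal is correct and follows exactly the approach the paper intends: the paper offers no explicit proof of this corollary beyond the parenthetical reference ``c.f.\ \cite{SGAI}~X.2.1,'' which is precisely the formal deduction from an equivalence of Galois categories to an isomorphism of automorphism groups of fiber functors that you have spelled out. Your verification of the fiber-functor compatibility $F\cong F_0\circ\Phi$ via the base-change identity is the only nontrivial point, and it is handled correctly.
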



\subsection{The residue map}
In what follows, all cohomology groups are \'etale cohomology groups.
For a ring $R$ and \'etale sheaf $F$ on $\Spec R$ we write $\H^a(R,
F)$ instead of $\H^a(\Spec R, F)$.  In particular, for a field $K$,
$\H^a(K, F)$ agrees with the Galois cohomology group $\H^a(G_K, F)$
where $G_K = \Gal(K_{\rm sep}/K)$ denotes the absolute Galois group of
$K$ and where we still write $F$ for the corresponding $G_K$-module.

Let $K$ be any field, let $v\colon K \to \ZZ \cup \{ \infty \}$ be a
discrete valuation on $K$, and let $k$ be its residue field.  Recall
that for any integer $r$ and any integer $n$ prime to the
characteristic of $k$ there is a group morphism
$$\partial_v\colon \H^a(K, \mu_n^{\tensor r}) \to
               \H^{a-1}(k,\mu_n^{\tensor (r-1)})
$$
called the \emph{residue} or \emph{ramification map} (see \cite{GMS}
II.7.9 or \cite{GilleSzamuely} VI.8).  The residue map has the
following functorial behavior: if $L$ is a finite extension of $K$
and $w\colon L \to \ZZ \cup \{ \infty \}$ is a discrete valuation with
residue field $l$ such that $w$ extends $v$ then we have a commutative
diagram
\begin{diagram}
\H^a(L, \mu_n^{\tensor r}) &\rTo^{\partial_w} & \H^{a-1}(l, \mu_n^{\tensor (r-1)})\\
\uTo^{\res}&&\uTo_{e_{w/v} \cdot \res}\\
\H^a(K, \mu_n^{\tensor r}) &\rTo^{\partial_v}& \H^{a-1}(k, \mu_n^{\tensor (r-1)})
\end{diagram}
where $e_{w/v}$ denotes the ramification degree of $w$ over $v$, and $\res$ denotes
cohomological restriction.

If $X$ is a normal integral scheme and $D\subset X$ is an irreducible
Weil divisor then we write 
$$\partial_D\colon \H^a(K(X), \mu_n^{\tensor r}) \to \H^{a-1}(K(D), \mu_n^{\tensor (r-1)})$$
for the residue map with respect to the discrete valuation $v_D$.

\begin{lemma} Let $X$ be a smooth curve over a complete
  discrete valuation ring, and let $n$ be an invertible
  integer on $X$ (i.e., $n$ is prime to all residue characteristics on
  $X$).  Let $D$ be a mark on $X$, $U = X - D$, and denote by $j\colon
  U\into X$ and $i\colon D \into X$ the corresponding open and closed
  immersions.  We have an exact Gysin sequence \label{gysinlemma}
  \begin{eqnarray*}
    0 &\to& \H^1(X, \mu^{\tensor r}_n) \to \H^1(U, \mu^{\tensor r}_n)
    \to \H^0(D, \mu^{\tensor (r-1)}_n)\\
    &\to& \H^2(X, \mu^{\tensor r}_n) \to \H^2(U, \mu^{\tensor r}_n) \to
    \H^1(D, \mu^{\tensor (r-1)}_n)\\
    &\to& \H^3(X, \mu^{\tensor r}_n) \to \H^3(U, \mu^{\tensor r}_n) \to
    \H^2(D, \mu^{\tensor (r-1)}_n)\to \cdots
  \end{eqnarray*}  
  where $\H^a(X, \mu^{\tensor r}_n) \to \H^a(U, \mu^{\tensor r}_n)$ are
  the natural restriction maps, and the maps $\H^a(U, \mu^{\tensor
    r}_n) \to \H^{a-1}(D, \mu^{\tensor (r-1)}_n)$ are compatible with
  the residue maps in the sense that the following diagram
  commutes up to sign:
  \begin{diagram}
    \H^a(U, \mu^{\tensor r}_n) &\rTo& \H^{a-1}(D, \mu^{\tensor
      (r-1)}_n)\\
    \dTo && \dTo\\
    \H^a(K(X), \mu^{\tensor r}_n) &\rTo^{\partial_D}& \H^{a-1}(K(D), \mu^{\tensor
      (r-1)}_n)
  \end{diagram}
 \end{lemma}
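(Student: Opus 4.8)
The plan is to obtain the sequence as the localization (cohomology-with-supports) long exact sequence for the pair $(X,U)$ attached to the closed complement $D$, and then to identify the local cohomology groups by purity. Write $\mathcal F = \mu_n^{\tensor r}$. Since $n$ is invertible on $X$, $\mathcal F$ is a locally constant constructible sheaf whose pullbacks to $U$ and to $D$ are again $\mu_n^{\tensor r}$. The open-closed decomposition $U \xrightarrow{j} X \xleftarrow{i} D$ yields the standard distinguished triangle $i_* Ri^!\mathcal F \to \mathcal F \to Rj_*(\mathcal F|_U) \to$ in the derived category of $\ZZ/n$-sheaves on $X$, whose hypercohomology long exact sequence on $X$ reads
$$\cdots \to \H^a_D(X,\mathcal F) \to \H^a(X,\mathcal F) \to \H^a(U,\mathcal F) \to \H^{a+1}_D(X,\mathcal F) \to \cdots,$$
where $\H^a_D(X,\mathcal F) = \mathbb H^a(D, Ri^!\mathcal F)$ is cohomology with supports in $D$ and the arrow $\H^a(X,\mathcal F) \to \H^a(U,\mathcal F)$ is the natural restriction.

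The key input is purity. By Lemma~\ref{marklemma}~(i) both $X$ and $D$ are regular; in fact $X$ is smooth over $R$ of relative dimension $1$, while $D$, being a mark, is \'etale and hence smooth over $R$ of relative dimension $0$, so $i\colon D \into X$ is a closed immersion of smooth $(\Spec R)$-schemes of codimension~$1$. The relative purity theorem for such a smooth pair (see \cite{Milne}~VI) then provides a canonical isomorphism $Ri^!\mu_n^{\tensor r} \cong \mu_n^{\tensor(r-1)}[-2]$ in the derived category on $D$, whence $\H^a_D(X,\mu_n^{\tensor r}) \cong \H^{a-2}(D,\mu_n^{\tensor(r-1)})$. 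Substituting this into the localization sequence turns the boundary map into the Gysin map $\H^a(U,\mu_n^{\tensor r}) \to \H^{a-1}(D,\mu_n^{\tensor(r-1)})$ and the support map into the pushforward $\H^{a-2}(D,\mu_n^{\tensor(r-1)}) \to \H^a(X,\mu_n^{\tensor r})$, producing exactly the displayed sequence. The leading $0$ and the injectivity of $\H^1(X) \to \H^1(U)$ are automatic, since $\H^{a-2}(D,\mu_n^{\tensor(r-1)})$ vanishes for $a \le 1$.

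It remains to verify compatibility with the residue map up to sign, and this is the step I expect to be the main obstacle. The point is that $\partial_D$ is itself built from purity: if $\eta_D$ denotes the generic point of (a component of) $D$, then $\O_{X,\eta_D}$ is a discrete valuation ring with fraction field $K(X)$ and residue field $K(D)$, since $X$ is regular and $D$ has codimension~$1$; by construction $\partial_D$ is the composite of the restriction $\H^a(K(X)) \to \H^a(\Frac\O_{X,\eta_D})$ with the boundary map of the one-dimensional localization sequence for $\Spec\O_{X,\eta_D}$, identified via purity for its closed point (see \cite{GMS}~II.7 and \cite{GilleSzamuely}~VI.8). Thus the Gysin boundary and $\partial_D$ arise from \emph{one and the same} purity isomorphism, the former globally on $X$ and the latter after localizing at $\eta_D$.

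The verification therefore reduces to the functoriality of the purity isomorphism under the flat localization $\Spec\O_{X,\eta_D} \to X$ (equivalently, under passage to the generic point of $D$), which is precisely what supplies the vertical restriction maps in the asserted square and matches the two boundary maps. I would carry this out by base-changing the localization triangle along $\Spec\O_{X,\eta_D} \to X$ and invoking the compatibility of the global and local purity isomorphisms; the only subtlety is the sign introduced by the rotation of the distinguished triangle, which is exactly why the square is asserted to commute only up to sign. The remaining bookkeeping (the Tate twist $r \mapsto r-1$ and the degree shift by $2$) is already encoded in the purity isomorphism and requires no further argument.
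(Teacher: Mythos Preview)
Your argument is correct and rests on the same purity input as the paper's, but the packaging differs. The paper derives the long exact sequence from the Leray spectral sequence for $j$: it shows directly that $R^qj_*\mu_{n,U}^{\tensor r}$ vanishes for $q\ge 2$, equals $\mu_{n,X}^{\tensor r}$ for $q=0$, and equals $i_*\mu_{n,D}^{\tensor(r-1)}$ for $q=1$ (first for $r=1$ via \cite{SGAIVc}~XIX.3.3, then for general $r$ by a cup-product argument), so that the two-row spectral sequence degenerates into the Gysin sequence. You instead use the localization triangle $i_*Ri^!\mathcal F\to\mathcal F\to Rj_*j^*\mathcal F$ and invoke purity in the form $Ri^!\mu_n^{\tensor r}\simeq\mu_n^{\tensor(r-1)}[-2]$. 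These are equivalent formulations of the same purity theorem, and each yields the other; your route is a bit more streamlined in that the twist and the degree shift come in a single stroke, while the paper's route makes the identification $R^1j_*\mu_n^{\tensor r}\cong i_*\mu_n^{\tensor(r-1)}$ explicit on the level of sheaves. For the compatibility with $\partial_D$, both arguments amount to the same naturality: the paper localizes the Gysin sequence at the generic point of $D$ via naturality of the Leray spectral sequence and then cites \cite{CT}~\S3.3 for the identification of the local boundary with the residue map (up to sign), which is exactly the functoriality of the purity isomorphism under $\Spec\sheaf O_{X,\eta_D}\to X$ that you invoke.
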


 \begin{proof} Note the conclusions make sense even if $D$ is
   reducible, for in this case $D$ is the disjoint union of its
   irreducible components and $K(D)$ is a direct product of the
   corresponding function fields.  The long exact Gysin sequence will
   follow once we show that
   $$R^qj_* \mu_{n, U}^{\tensor r} = \begin{cases}
     \mu_{n, X}^{\tensor r}& \mbox{if } q=0\\
     i_* \mu_{n, D}^{\tensor (r-1)}& \mbox{if } q=1\\
     0& \mbox{if } q\ge 2
  \end{cases}$$ 
For then the Leray spectral sequence 
$$\H^p(X, R^qj_* \mu_{n, U}^{\tensor r}) \Longrightarrow
    \H^{p+q}(U, \mu_{n, U}^{\tensor r})
$$
degenerates, and as $i_*$ is an exact functor we may substitute
$H^{q-1}(D, \mu_{n, D}^{\otimes(r-1)})$ for $H^{q-1}(X, i_*\mu_{n,
  D}^{\otimes (r-1)})$, by the Leray spectral sequence for $i_*$.

Since $D$ is a mark, $(X, D)$ is a smooth $(\Spec R)$-pair of
codimension $c=1$, and hence by purity (\cite{Milne} VI.5.1) we
already know that $R^qj_* \mu_{n, U}^{\tensor r} = 0$ for $q\ne 0, 1$,
and that $j_*\mu_{n, U}^{\tensor r} = \mu_{n, X}^{\tensor r}$.  It
remains to compute $R^1j_* \mu_{n, U}^{\tensor r}$.

By \cite{SGAIVc} XIX.3.3 we know that $R^1 j_* \mu_{n, U} = i_*
\bigl((\ZZ/n)_D\bigr)$.  For the general case, consider the cup
product map
$$\mu_{n, X}^{\tensor (r-1)} \tensor i_*\bigl((\ZZ/n)_D\bigr)
  = R^0 j_* \mu_{n, U}^{\tensor (r-1)} \tensor R^1 j_* \mu_{n, U} \rTo^{\cup}
    R^1 j_* \mu_{n, U}^{\tensor r}
$$
We see this is an isomorphism by looking at stalks.  Since $i^*\mu_{n,
  X}^{\tensor (r-1)} = \mu_{n, D}^{\tensor (r-1)}$, we obtain a
sequence of maps

$$R^1 j_* \mu_{n, U}^{\tensor r}
   \lTo^\cup_\approx \mu_{n, X}^{\tensor (r-1)} \tensor i_*\bigl((\ZZ/n)_D\bigr)
   \rTo^{can} i_* i^*\mu_{n, X}^{\tensor (r-1)} \tensor i_*\bigl((\ZZ/n)_D\bigr)
   \rTo^\cup i_*\mu_{n, D}^{\tensor (r-1)}
$$
which we see are isomorphisms, again by looking at stalks.  This
yields the required isomorphism $R^1j_* \mu_{n, U}^{\tensor r} = i_*
\mu_{n, D}^{\tensor (r-1)}$.

  Finally, to prove the compatibility with the residue map, we
  may assume that $D$ is connected.  Observe that $K(D)$ is the residue field of
  $\sheaf{O}_{v_D}$.  By the naturality of the Leray spectral sequence we have a
  commutative diagram
\[\xymatrix@C=18pt{
  \cdots\ar[r]& \H^a(X, \mu^{\tensor r}_n)\ar[r]\ar[d]&\H^a(U, \mu^{\tensor r}_n)\ar[r]\ar[d]&\H^{a-1}(D, \mu^{\tensor(r-1)}_n)\ar[r]\ar[d]&\cdots\\
  \cdots\ar[r]& \H^a(\sheaf{O}_{v_D}, \mu^{\tensor r}_n)\ar[r]&\H^a(K(X), \mu^{\tensor r}_n)\ar[r]^(.425){(*)}& \H^{a-1}(K(D), \mu^{\tensor(r-1)}_n)\ar[r]&\cdots
  }\]
  whose rows are Gysin sequences, and $(*)$ is known to be the
  residue map with respect to the valuation $v_D$ (see \cite{CT}
  \S3.3) possibly up to sign.
\end{proof}

\begin{remark} \label{Faddeev} Let $K$ be a field.  We apply the
  previous lemma to $X = \Spec K[t]$.  Since $X \to \Spec K$ is
  acyclic (\cite{Milne} VI.4.20) we have $\H^a(X, \mu^{\tensor
    r}_n) = \H^a(K, \mu^{\tensor r}_n)$.  Moreover, any mark $D$ is a disjoint
  union of closed points, therefore we have $\H^{a-1}(D, \mu^{\tensor (r-1)}_n) =
  \bigoplus_{P\in D} \H^{a-1}(K(P), \mu^{\tensor (r-1)}_n)$.  Thus the
  Gysin sequence for $X$ reads
  $$\cdots \to \H^a(K, \mu^{\tensor r}_n) \to \H^a(U, \mu^{\tensor r}_n)
  \to \bigoplus_{P\in D}  \H^{a-1}(K(P), \mu^{\tensor (r-1)}_n)\to \cdots
  $$
  where $U = X - D$.  On the other hand, since \'etale cohomology
  commutes with projective limits of schemes (\cite{Milne} III.1.16)
  and $\Spec K(t) = \projlim_{D} X - D$, where $D$ runs over all marks
  of $X$, by taking limits we obtain
  $$\cdots \to \H^a(K, \mu^{\tensor r}_n) \to \H^a(K(t), \mu^{\tensor r}_n)
  \to \bigoplus_{P\in X^{(1)}}  \H^{a-1}(K(P), \mu^{\tensor (r-1)}_n)\to \cdots
  $$
  where $X^{(1)}$ denotes the set of closed points (i.e. points of
  codimension 1) of $X$.  This is just the familiar (affine) Faddeev
  sequence with finite coefficients (\cite{GilleSzamuely} 6.9.3),
  which splits into short exact sequences
  $$0 \to \H^a(K, \mu^{\tensor r}_n) \to \H^a(K(t), \mu^{\tensor r}_n)
    \to \bigoplus_{P\in X^{(1)}}  \H^{a-1}(K(P), \mu^{\tensor (r-1)}_n)\to
    0
  $$
  via the \emph{coresidue maps}
  \begin{align*}
    \psi_P \colon \H^{a-1}(K(P), \mu^{\tensor (r-1)}_n) &\to
                  \H^{a}(K(t), \mu^{\tensor r}_n)\\
    \xi &\mapsto \cor_{K(P)(t)|K(t)}(\xi.(t-\tau_P))
  \end{align*}
  where $\tau_P$ denotes the image of $t$ in $K(P)$ (so that $K(P) =
  K(\tau_P)$) and $(t-\tau_P)$ is the image of $t-\tau_P$ in $ \H^1(K(P)(t),\mu_n)$.
  
\end{remark}


\section{Splitting the restriction map}

\subsection{Setup and conventions}
Henceforth we write \label{mainsetup}
\begin{itemize}
\item $(R, \ideal{m}, k) = \hbox{complete}$ discrete valuation ring
  with finite residue field $k$ of characteristic $p$, and fraction
  field $K=\Frac R$ (a local field);

\item $\pi = \hbox{a uniformizer}$ of $R$;

\item $n = \hbox{integer}$ prime to $p$;

\item $X = \hbox{a}$ smooth integral curve over $R$;

\item $X_0 =
  \hbox{the}$ special fiber of $X$ (a smooth integral curve over $k$);

\item $K(X) = \hbox{the}$ function field of $X$.

\item $k(X_0) =
  \hbox{the}$ function field of $X_0$ (a global field);

\item $K\widehat(X) = \hbox{completion}$ of $K(X)$ with respect to the
  valuation defined by the special fiber $X_0$.  Observe that $\pi$ is
  also a uniformizer of $K\widehat(X)$ and that its residue field is
  $k(X_0)$;

\item $V = \hbox{a fixed set}$ of marks on $X$ lifting each mark (i.e
  closed point) of $X_0$, see Lemma~\ref{marklemma}~(vii).
\end{itemize}

By \cite{Liu} VIII.3.4, the set $V$ is in 1-1 correspondence with a
subset of closed points of the generic fiber $X_{\eta} \df= X
\times_{\Spec R} \Spec K$.  In what follows, we will identify these
two sets and refer to the unique mark $D \in V$ (or closed point $P\in
X_{\eta}$ whose closure equals $D$) lifting a closed point $P_0\in
X_0$ as the \emph{$V$-lift} of $P_0$.  For instance, if $X = \PP^1_R =
\Proj R[x, y]$ and we choose the mark defined by $y$ to be the
$V$-lift of the ``infinite point" of $X_0 = \PP^1_k = \Proj k[x,
y]$ defined by $y$, then specifying the remaining $V$-lifts amounts to
choosing a monic lift in $R[t]$ for each monic irreducible polynomial
in $k[t]$ (where $t = x/y$).


\subsection{Splitting the restriction map}
\label{splitting}
In this section we construct a map
$$s = s_{V, \pi} \colon \Br(K\widehat(X))' \to \Br(K(X))'$$
splitting the restriction map
$$\res\colon \Br(K(X))' \to \Br(K\widehat(X))'$$
Here ${}'$ denotes the prime-to-$p$ part of the corresponding group.
In the next section we show that this map preserves the index.

\begin{lemma} \label{tamelift} (Tame lifting) The choice of $V$
  defines, for each $a\ge 0$ and $r\in \ZZ$, a group morphism
$$\lambda_V\colon \H^a(k(X_0), \mu_n^{\tensor r}) \to \H^a(K(X), \mu_n^{\tensor
  r})$$
compatible with the residue maps: for each irreducible mark $D \in V$,
\begin{diagram}
\H^a(k(X_0), \mu_n^{\tensor r}) &\rTo^{\lambda_V}& \H^a(K(X), \mu_n^{\tensor r})\\
\dTo^{\partial_{D_0}} &&\dTo_{\partial_D}\\
\H^{a-1}(k(D_0), \mu_n^{\tensor (r-1)}) &\rTo & \H^{a-1}(K(D), \mu_n^{\tensor (r-1)})
\end{diagram}
commutes up to sign, where the bottom arrow is given by the composition
$$\H^{a-1}(k(D_0), \mu_n^{\tensor (r-1)}) \lTo^{can}_{\approx}
  \H^{a-1}(D, \mu_n^{\tensor (r-1)}) \rTo^{can} \H^{a-1}(K(D),  \mu_n^{\tensor (r-1)}).
$$
\end{lemma}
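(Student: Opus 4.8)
The plan is to realize $\lambda_V$ as a limit of comparison isomorphisms between the cohomology of an arithmetic open $X-D$ and that of its special fiber $X_0-D_0$, followed by restriction to the generic point. The starting point is the claim that if $D$ is a finite union of marks in $V$ and we set $U=X-D$, $U_0=X_0-D_0$, then special-fiber restriction induces an isomorphism $\H^a(U,\mu_n^{\tensor r})\to\H^a(U_0,\mu_n^{\tensor r})$ for all $a$ and $r$. To prove it I would lay the Gysin sequence of $(X,D)$ from Lemma~\ref{gysinlemma} alongside that of $(X_0,D_0)$ and connect them by the special-fiber restriction maps; the naturality of the Leray spectral sequence used in the proof of Lemma~\ref{gysinlemma} makes the resulting ladder commute. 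Because $X$ is projective, hence proper, over the complete (so henselian) ring $R$, proper base change (\cite{Milne} VI.2.7) gives $\H^a(X,\mu_n^{\tensor r})\approx\H^a(X_0,\mu_n^{\tensor r})$; and because $D=\Spec S$ with $S$ a product of complete local $R$-algebras (Lemma~\ref{marklemma}~(iii)), the same theorem gives $\H^{a}(D,\mu_n^{\tensor(r-1)})\approx\H^{a}(D_0,\mu_n^{\tensor(r-1)})$. Applying the five lemma at each $\H^a(U)$ then yields the asserted isomorphism.

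To pass to the function fields I would exploit that $V$ contains exactly one lift $D$ of each closed point $P_0$ of $X_0$, with $(D)_0=P_0$. Thus $S_0\mapsto D_{S_0}:=\bigcup_{P_0\in S_0}(\text{$V$-lift of }P_0)$ identifies the directed system of finite sets $S_0$ of closed points of $X_0$ with a directed system of marks in $V$, compatibly with special-fiber restriction, and each complement $X-D_{S_0}$ is an affine open (the complement of a Cartier divisor), so the transition maps are affine. Since \'etale cohomology commutes with the corresponding projective limit of schemes (\cite{Milne} III.1.16) and $\Spec k(X_0)=\projlim_{S_0}(X_0-S_0)$, taking the colimit of the maps
$$\H^a(X_0-D_0,\mu_n^{\tensor r})\xrightarrow{\approx}\H^a(X-D,\mu_n^{\tensor r})\xrightarrow{\res}\H^a(K(X),\mu_n^{\tensor r})$$
(inverse of the isomorphism above, followed by restriction to the generic point of $X$) produces the desired morphism $\lambda_V\colon\H^a(k(X_0),\mu_n^{\tensor r})\to\H^a(K(X),\mu_n^{\tensor r})$. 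A routine check shows these maps are compatible as $D$ grows, so the colimit is well defined, and the dependence on $V$ is exactly through the chosen lifts.

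For the residue compatibility I would fix an irreducible mark $D\in V$ over $P_0=D_0$ and chase both ways around the square using the Gysin boundary maps. On $X_0$ the residue $\partial_{D_0}=\partial_{P_0}$ agrees up to sign with the Gysin boundary $\H^a(X_0-D_0)\to\H^{a-1}(D_0)$, and on $X$ the residue $\partial_D$ agrees up to sign with $\H^a(X-D)\to\H^{a-1}(D)$; both agreements are the content of Lemma~\ref{gysinlemma}. The special-fiber restriction intertwines these two boundary maps and, under the identifications $\H^{a-1}(D)\approx\H^{a-1}(D_0)=\H^{a-1}(k(D_0))$ (proper base change) and $\H^{a-1}(D)\to\H^{a-1}(K(D))$ (restriction to the generic point of $D$), reproduces the bottom arrow of the square. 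Commutativity up to sign then follows from the naturality established above.

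I expect the principal obstacle to be precisely this bookkeeping: verifying that the proof of Lemma~\ref{gysinlemma} is functorial enough to produce a commuting ladder under special-fiber restriction, that the comparison isomorphisms are genuinely compatible in the colimit, and that the sign ambiguities of Lemma~\ref{gysinlemma} can be absorbed uniformly so that the final square commutes up to a single sign. The conceptual inputs, proper base change and the five lemma, are standard; the real difficulty is organizing them, together with the limit over $V$, into one coherent diagram.
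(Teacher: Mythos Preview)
Your proposal is correct and follows essentially the same route as the paper: compare the Gysin sequences for $(X,D)$ and $(X_0,D_0)$ via special-fiber restriction, invoke proper base change on $X$ and on $D$, apply the five lemma to get $\H^a(U,\mu_n^{\tensor r})\cong\H^a(U_0,\mu_n^{\tensor r})$, then pass to the colimit over marks supported in $V$ and use the Gysin--residue compatibility of Lemma~\ref{gysinlemma} for the square. The paper's proof is terser about the bookkeeping you flag, but the logical structure is identical.
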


\begin{proof} Let $D$ be a mark with support in $V$, and set $U = X -
  D$.  Consider the commutative diagram
  \begin{diagram}
    \cdots & \rTo & \H^a(X, \mu^{\tensor r}_n) &\rTo& \H^a(U, \mu^{\tensor r}_n)
           & \rTo & \H^{a-1}(D, \mu^{\tensor (r-1)}_n) &\rTo& \cdots\\
           &&\dTo^{\approx} && \dTo && \dTo^{\approx}\\
    \cdots & \rTo & \H^a(X_0, \mu^{\tensor r}_n) &\rTo& \H^a(U_0, \mu^{\tensor r}_n)
           & \rTo & \H^{a-1}(D_0, \mu^{\tensor (r-1)}_n) &\rTo& \cdots\\
  \end{diagram}
  where the rows are the exact Gysin sequences for $(X, D)$ and $(X_0, D_0)$
  respectively (see Lemma~\ref{gysinlemma}), and the vertical arrows are the natural
  ones (restrictions to the fibers).  Since $R$ is henselian, the left and right arrows are
  isomorphisms by proper base change (\cite{Milne} VI.2.7), hence so is
  the middle one by the 5-lemma.

  Now define $\lambda_D$ as the composition
  $$\lambda_D\colon \H^a(U_0, \mu^{\tensor r}_n) \lTo^{\approx} \H^a(U,
  \mu^{\tensor r}_n) \rTo^{can} \H^a(K(X), \mu_n^{\tensor r})$$
  Consider the set $\mathcal{V}$ of all marks with support in $V$ and order
  them by inclusion.  Since \'etale cohomology commutes with
  projective limits of schemes (\cite{Milne} III.1.16) and
  $$\Spec k(X_0) = \projlim_{D\in \mathcal{V}} U_0$$
  taking the direct limit of the $\lambda_D$ over all $D \in
  \mathcal{V}$ we obtain the desired map $\lambda_V\colon \H^a(k(X_0),
  \mu^{\tensor r}_n) \to \H^a(K(X), \mu_n^{\tensor r})$.  Since by
  Lemma~\ref{gysinlemma} the Gysin sequences are compatible with
  residue maps up to sign, and the arrow 
  \[\H^{a-1}(k(D_0), \mu_n^{\tensor (r-1)}) \lTo^{can}_{\approx}
  \H^{a-1}(D, \mu_n^{\tensor (r-1)})\]
  is invertible, we see that $\lambda_V$ is also compatible
  with residue maps.
\end{proof}

\begin{remark}
  In case $X = \PP^1_R$, we can give a more explicit description of
  the tame lifting using the Faddeev sequence (see Remark
  \ref{Faddeev}).  Lifting the point at infinity as in the example of
  Section~\ref{mainsetup}, the map $\lambda_V$ can be defined by the
  following commutative diagram
  \[\xymatrix@C=14pt{
  0\ar[r]&\H^a(k, \mu_n^{\tensor r})\ar[r]\ar[d]&\H^a(k(X_0), \mu_n^{\tensor r})\ar[r]\ar@{-->}[d]^{\lambda_V}&\lower5ex\hbox{${\bigoplus_{P_0 \in X_0^{(1)}}\limits} \H^{a-1}(k(P_0), \mu_n^{\tensor (r-1)})$}\ar[r]\ar[d]&0\\
  0\ar[r]& \H^a(K, \mu_n^{\tensor r})\ar[r]&\H^a(K(X), \mu_n^{\tensor r})\ar[r]&\lower5ex\hbox{${\bigoplus_{ P\in X_\eta^{(1)}}\limits}$}\lower5ex\hbox{$H^{a-1}(K(P), \mu_n^{\tensor (r-1)})$}\ar[r]&0
  }\]

  \noindent where each row is the split exact Faddeev sequence of
  Remark \ref{Faddeev}.  The left vertical arrow is the natural one
  while the right vertical arrow sends, via the natural map
  $\H^{a-1}(k(P_0), \mu_n^{\tensor (r-1)}) \to \H^{a-1}(K(P),
  \mu_n^{\tensor (r-1)})$, the $P_0$-th component to the
  $P$-th component where $P$ denotes the generic point of the $V$-lift
  of $P_0$.  Explicitly, using the splitting given by the coresidue
  maps $\psi_{P_0}$, we may write an element of $\H^a(k(X_0),
  \mu_n^{\tensor r})$ as $\alpha_0 + \sum_{P_0} \psi_{P_0}(\xi_{P_0})$
  with $\alpha_0\in \H^a(k, \mu_n^{\tensor r})$ and $\xi_{P_0} \in
  \H^{a-1}(k(P_0), \mu_n^{\tensor (r-1)})$.  Then
  $$\lambda_V \Bigl(\alpha_0 + \sum_{P_0} \psi_{P_0}(\xi_{P_0})\Bigr)
  = \alpha + \sum_{P} \psi_P(\xi_P)
  $$
  where $P$ is the closed point of $X_\eta$ corresponding to the
  $V$-lift of $P_0$ and $\alpha \in \H^a(K, \mu_n^{\tensor r})$ and
  $\xi_P \in \H^{a-1}(K(P), \mu_n^{\tensor (r-1)})$ denote the
  unramified lifts of $\alpha_0$ and $\xi_{P_0}$ respectively.
\end{remark}

\begin{lemma} \label{uglylemma} Let $\chi_0\in \H^1(k(X_0), \ZZ/n)$, and
  let $D_0 \subset X_0$ be the ramification locus of $\chi_0$.  Denote
  by $Y_0$ the cyclic tamely ramified cover of $(X_0, D_0)$ defined by
  $\chi_0$.  Let $\chi = \lambda_V(\chi_0) \in \H^1(K(X), \ZZ/n)$ be as
  in the previous lemma.  Then $\chi$ defines the tamely ramified
  cover $Y$ of $(X, D)$ lifting $Y_0$ in Theorem \ref{equivcat}, where $D$ is the $V$-lift of
  $D_0$.
\end{lemma}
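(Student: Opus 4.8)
The plan is to produce the lift of $Y_0$ abstractly from Theorem~\ref{equivcat}, extract from it a character $\chi'$, and then show $\chi'=\chi=\lambda_V(\chi_0)$ inside $\H^1(K(X),\ZZ/n)$. Write $D$ for the $V$-lift of $D_0$, and set $U=X-D$, $U_0=X_0-D_0$. Let $Y'\to X$ be the tamely ramified cover of $(X,D)$ corresponding to $Y_0$ under the equivalence $\Rev^D_R(X)\xrightarrow{\approx}\Rev^{D_0}_k(X_0)$ of Theorem~\ref{equivcat}, so that $Y'\times_{\Spec R}\Spec k\cong Y_0$. Since $Y'\to X$ is finite flat over the connected base $\Spec R$, its degree equals that of $Y_0\to X_0$, namely $m=\mathrm{ord}(\chi_0)$; and as the equivalence preserves $X$-morphisms it identifies $\mathrm{Aut}_X(Y')$ with $\mathrm{Aut}_{X_0}(Y_0)=\ZZ/m$. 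Hence $Y'\to X$ is connected cyclic Galois of degree $m$, so it is defined by a character $\chi'\in\H^1(K(X),\ZZ/n)$ of order $m$, and by Lemma~\ref{specialval}~(i) it is the normalization of $X$ in the corresponding cyclic extension.

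Next I would reduce the comparison to the open curve $U$. Both $\chi'$ and $\chi$ lie in the image of the restriction $\H^1(U,\ZZ/n)\to\H^1(K(X),\ZZ/n)$: the cover $Y'$ is étale over $U$ by definition of a tamely ramified cover, so $\chi'$ comes from the class of the étale $\ZZ/m$-torsor $Y'|_U\to U$; and $\chi=\lambda_V(\chi_0)$ factors through $\H^1(U,\ZZ/n)$ by the very construction of $\lambda_V$ in Lemma~\ref{tamelift}, since $\chi_0$, being unramified on $U_0$, lifts to $\H^1(U_0,\ZZ/n)$ and $\lambda_D$ carries this lift through $\H^1(U,\ZZ/n)$. (Consistently, Lemma~\ref{tamelift} shows $\partial_D\chi$ matches $\partial_{D_0}\chi_0$ while all other residues vanish, including the one along the special fiber, because $\eta_{X_0}\in U$.) As $U$ is normal and integral, the generic point induces a surjection $G_{K(X)}\onto\pi_1(U)$, so this restriction map is injective; therefore it suffices to prove that $\chi$ and $\chi'$ have the same preimage in $\H^1(U,\ZZ/n)$.

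I would establish this equality via the base-change isomorphism underlying $\lambda_V$. In the proof of Lemma~\ref{tamelift}, $\lambda_D$ is built from the proper-base-change isomorphism $\H^1(U,\ZZ/n)\xrightarrow{\approx}\H^1(U_0,\ZZ/n)$ (restriction to the special fiber), and by construction the preimage of $\chi$ maps to $\chi_0|_{U_0}$. It remains to check that the preimage of $\chi'$ also maps to $\chi_0|_{U_0}$. Geometrically this says that restricting the étale $\ZZ/n$-cover $Y'|_U\to U$ to the special fiber yields the étale cover $Y_0|_{U_0}\to U_0$ classified by $\chi_0|_{U_0}$, which is immediate from $Y'\times_{\Spec R}\Spec k\cong Y_0$. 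Granting this, $\chi$ and $\chi'$ have equal image in $\H^1(U_0,\ZZ/n)$, hence equal preimage in $\H^1(U,\ZZ/n)$, hence $\chi=\chi'$ in $\H^1(K(X),\ZZ/n)$; so $\chi$ defines $Y'$, the lift of $Y_0$, as claimed.

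The main obstacle is precisely the compatibility invoked in the previous paragraph: one must know that the cohomological isomorphism $\H^1(U,\ZZ/n)\cong\H^1(U_0,\ZZ/n)$ of Lemma~\ref{tamelift}, obtained from proper base change and the five lemma, coincides with the geometric operation of restricting a $\ZZ/n$-torsor on $U$ to its special fiber, and that this operation agrees with the equivalence of Theorem~\ref{equivcat} on the subcategory of $\ZZ/n$-covers. Both assertions amount to the functoriality, under base change, of the identification of degree-one cohomology with isomorphism classes of torsors; equivalently, they may be read off from the isomorphism $\pi_1^t(X_0,D_0,\overline{x}_0)\xrightarrow{\approx}\pi_1^t(X,D,\overline{x}_0)$ of Corollary~\ref{tamepicorollary}, under which characters to $\ZZ/n$ correspond to cyclic tame covers compatibly with restriction to the special fiber. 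Once this compatibility is pinned down, the remaining steps are formal.
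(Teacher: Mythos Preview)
Your proposal is correct and follows essentially the same approach as the paper. The paper's proof is just the contrapositive organization of yours: rather than producing the lift $Y'$ first, extracting $\chi'$, and then proving $\chi'=\chi$, it starts from $\chi\in\H^1(U,\ZZ/n)$, observes (by construction of $\lambda_V$) that $\chi$ restricts to $\chi_0\in\H^1(U_0,\ZZ/n)$, and then invokes exactly the compatibility you flagged as the ``main obstacle''---namely that $\H^1(U,\ZZ/n)=\Hom_{\rm cont}(\pi_1^t(X,D),\ZZ/n)$ and that restriction to the special fiber is induced by the isomorphism $\pi_1^t(X_0,D_0)\xrightarrow{\approx}\pi_1^t(X,D)$ of Corollary~\ref{tamepicorollary}---to conclude that the cover defined by $\chi$ specializes to $Y_0$, hence is the lift. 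So the only difference is that you route the compatibility through torsors while the paper routes it through $\pi_1^t$; since you yourself note these are equivalent formulations, the arguments are the same in substance.
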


\begin{proof}
  By definition of $\lambda_V$, $\chi \in \H^1(X-D, \ZZ/n) \subset
  \H^1(K(X), \ZZ/n)$ is the unique character that restricts to $\chi_0
  \in \H^1(X_0-D_0, \ZZ/n) \subset \H^1(k(X_0), \ZZ/n)$.  Since the
  groups $\H^1(X-D, \ZZ/n) = \Hom_{\rm cont}(\pi_1^t(X, D), \ZZ/n)$
  and $\H^1(X_0-D_0, \ZZ/n) = \Hom_{\rm cont}(\pi_1^t(X_0, D_0),
  \ZZ/n)$ classify degree $n$ (tame) cyclic Galois covers of $(X, D)$
  and $(X_0, D_0)$ (see \cite{FreitagKiehl} I.2.11), and the
  restriction map $\res\colon \H^1(X-D, \ZZ/n) \to \H^1(X_0-D_0,
  \ZZ/n)$ is given by the natural map $\pi_1^t(X_0, D_0) \rTo^\approx
  \pi_1^t(X, D)$ induced by the functor $Y \mapsto Y_0$ (see Corollary
  \ref{tamepicorollary}), the cyclic Galois cover $Y$ of $(X, D)$
  defined by $\chi$ restricts to the cyclic Galois cover $Y_0$ of
  $(X_0, D_0)$ defined by $\chi_0$, and we are done.\end{proof}

\begin{theorem} Let $X$, $K(X)$, $K\widehat(X)$ and $n$ be as in Section \ref{mainsetup}.  Each choice of $\pi$ and $V$ as in Section \ref{mainsetup} defines, for each $a\ge
  0$ and all $r\in \ZZ$, a group morphism
  $$  s = s_{V, \pi}\colon \H^a(K\widehat(X), \mu_n^{\tensor r}) \to
                          \H^a(K(X), \mu_n^{\tensor r})
  $$
  splitting $\res\colon \H^a(K(X), \mu_n^{\tensor r}) \to \H^a(\hat
  K(X), \mu_n^{\tensor r})$, that is, such that $\res\circ s$ is the identity.
\end{theorem}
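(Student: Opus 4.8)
The plan is to reduce the statement to the well-known structure of the cohomology of the complete discretely valued field $K\widehat(X)$, whose residue field is $k(X_0)$ and for which $\pi$ is a uniformizer, and then to lift the two constituents of a class along the tame lifting map $\lambda_V$ of Lemma~\ref{tamelift}. First I would recall the standard split short exact sequence for $K\widehat(X)$ (see \cite{GMS} II.7.9 or \cite{GilleSzamuely} VI.8):
$$0 \to \H^a(k(X_0), \mu_n^{\tensor r}) \to \H^a(K\widehat(X), \mu_n^{\tensor r}) \rTo^{\partial} \H^{a-1}(k(X_0), \mu_n^{\tensor(r-1)}) \to 0,$$
where the first map is the unramified lift $\inf$ (inflation through the inertia quotient) and $\partial$ is the residue map at the special-fiber valuation. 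Since $\pi$ is a uniformizer of $K\widehat(X)$, the assignment $\xi \mapsto \inf(\xi)\cup(\pi)$ is a section of $\partial$ (up to the usual sign), so every $\beta \in \H^a(K\widehat(X), \mu_n^{\tensor r})$ decomposes uniquely as $\beta = \inf(\beta_0) + \inf(\partial\beta)\cup(\pi)$ with $\beta_0 \in \H^a(k(X_0), \mu_n^{\tensor r})$.

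With this decomposition in hand I would define, for $\beta$ as above,
$$s(\beta) = \lambda_V(\beta_0) + \lambda_V(\partial\beta)\cup(\pi),$$
where now $(\pi) \in \H^1(K(X), \mu_n)$ is the class of $\pi \in R \subset K(X)^*$. As $\beta \mapsto \beta_0$ and $\beta \mapsto \partial\beta$ are additive, $\lambda_V$ is a group homomorphism, and the cup product is bilinear, the map $s$ is a well-defined group homomorphism; it then remains to verify $\res\circ s = \id$.

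The crux — and the step I expect to be the main obstacle — is the identity $\res\circ\lambda_V = \inf$, i.e. restricting a tame lift to $K\widehat(X)$ recovers the unramified inflation. For $\gamma \in \H^a(k(X_0), \mu_n^{\tensor r})$, the class $\lambda_V(\gamma)$ comes by construction (Lemma~\ref{tamelift}) from a class on $U = X - D$ under the isomorphism $\H^a(U, \mu_n^{\tensor r}) \cong \H^a(U_0, \mu_n^{\tensor r})$, where $D$ is a union of marks. Since the marks are relative divisors, $U$ contains the generic point $\eta_0$ of the special fiber, so $\lambda_V(\gamma)$ is unramified at $v_{X_0}$ and may be restricted to the complete local ring $\widehat{\sheaf O}_{X, \eta_0}$, whose residue field is $k(X_0)$. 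Comparing the proper base change isomorphism over $R$ with the complete-local reduction $\H^a(\widehat{\sheaf O}_{X, \eta_0}, \mu_n^{\tensor r}) \cong \H^a(k(X_0), \mu_n^{\tensor r})$ (\cite{Milne} VI.2.7), one checks that the residue-field reduction of $\res(\lambda_V(\gamma))$ is exactly $\gamma$; hence $\res(\lambda_V(\gamma)) = \inf(\gamma)$.

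Granting this identity, and using that $\res$ is compatible with cup products and sends $(\pi)$ to the class of the uniformizer of $K\widehat(X)$, I would conclude
$$\res(s(\beta)) = \inf(\beta_0) + \inf(\partial\beta)\cup(\pi) = \beta,$$
which is precisely the canonical decomposition of $\beta$. Therefore $\res\circ s = \id$, so $s = s_{V,\pi}$ splits the restriction map as claimed; specializing to $a=2$, $r=1$ and restricting to prime-to-$p$ parts yields the splitting of $\res\colon\Br(K(X))'\to\Br(K\widehat(X))'$ announced in Section~\ref{splitting}.
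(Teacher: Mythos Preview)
Your proposal is correct and follows essentially the same approach as the paper: decompose via the Witt split exact sequence for the complete field $K\widehat(X)$ using the uniformizer $\pi$, define $s$ by applying $\lambda_V$ to both pieces and cupping the residue piece with $(\pi)$, and then verify $\res\circ\lambda_V=\inf$ by exploiting that the marks in $V$ are horizontal so $U=X-D$ contains the generic point $\eta_0$ of $X_0$. The paper makes the last step explicit via a commutative diagram through $A=\sheaf O_{X,\eta_0}$ and $\hat A$, whereas you describe the same comparison in prose; the content is the same.
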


\begin{proof}
  Let $A = \sheaf{O}_{X, \eta_0}$ where $\eta_0$ denotes the generic
  point of $X_0 \subset X$.  Then $A$ is a discrete valuation
  ring; let $\hat A$ be its completion, so that $K\widehat(X) = \Frac
  \hat A$.  Observe that the residue fields of both $A$ and $\hat A$
  are equal to $k(X_0)$, and that $\pi$ is a uniformizer for both
  discrete valuation rings.  We have an exact Witt sequence (see
  \cite{GMS} II.7.10 and II.7.11)
  \[\xymatrix@C=17pt{
  0\ar[r]&\H^a(k(X_0), \mu_n^{\tensor r}) \ar[r]&
           \H^a(K\widehat(X), \mu_n^{\tensor  r})  \ar[r]^(.45){\partial_{X_0}}&
           \H^{a-1}(k(X_0), \mu_n^{\tensor (r-1)})  \ar[r]& 0
}\]
  split by the cup product with $(\pi) \in \H^1(K\widehat(X), \mu_n)$:
  $$\H^{a-1}(k(X_0), \mu_n^{\tensor (r-1)}) \rTo^{- \,.\, (\pi)}
    \H^a(K\widehat(X), \mu_n^{\tensor r})
  $$
  Hence each element of $\H^a(K\widehat(X), \mu_n^{\tensor r})$ can be
  uniquely written as a sum $\alpha_0 +\chi_0.(\pi)$ with
  \begin{equation*}
  \begin{split}
    \alpha_0&\in \H^a(k(X_0), \mu_n^{\tensor r}) = \H^a(\hat A,
    \mu_n^{\tensor r}) \subset \H^a(K\widehat(X), \mu_n^{\tensor r})\qquad \text{and}\\
    \chi_0 &\in \H^{a-1}(k(X_0), \mu_n^{\tensor (r-1)}) = \H^{a-1}(\hat
    A,
    \mu_n^{\tensor (r-1)}) \subset \H^{a-1}(K\widehat(X), \mu_n^{\tensor (r-1)})
    \end{split}
  \end{equation*}
  We define
  $$s( \alpha_0 + \chi_0.(\pi) ) =
       \alpha + \chi.(\pi)
  $$
  where
  \begin{eqnarray*}
    \alpha &=& \lambda_V(\alpha_0) \in \H^a(K(X), \mu_n^{\tensor r})
    \qquad \text{and}\\
    \chi &=& \lambda_V(\chi_0) \in \H^{a-1}(K(X), \mu_n^{\tensor (r-1)})
  \end{eqnarray*}
  are the tame lifts given by Lemma~\ref{tamelift}.

  In order to show that $\res\circ s = \id$ it is enough to prove that
  $\alpha|_{K\widehat(X)} = \alpha_0$ and $\chi|_{K\widehat(X)} = \chi_0$.
  But this follows from the functoriality of cohomology: for instance,
  for $\alpha_0$, let $U_0$ be an open set on which $\alpha_0$ is
  defined (i.e., $\alpha_0$ belongs to the image of $\H^a(U_0,
  \mu_n^{\tensor r}) \to \H^a(k(X_0), \mu_n^{\tensor r})$), let $D_0 =
  X_0 - U_0$, let $D$ be the $V$-lift of $D_0$, and let $U= X-D$.  Observe that
  the generic point of $X_0$ belongs to $U$ so that the natural map
  $\H^a(U, \mu_n^{\tensor r}) \to \H^a(K(X), \mu_n^{\tensor r})$ factors
  through $\H^a(A, \mu_n^{\tensor r})$.  Consequently we have a
  commutative diagram
  \begin{diagram}
    \H^a(U_0, \mu_n^{\tensor r}) & \lTo^{\res}_{\approx} & \H^a(U, \mu_n^{\tensor
      r})\\
    \dTo && \dTo &\rdTo\\
    \H^a(k(X_0), \mu_n^{\tensor r}) & \lTo& \H^a(A, \mu_n^{\tensor
      r}) & \rTo &  \H^a(K(X), \mu_n^{\tensor r})\\
     &\luTo^\approx& \dTo&&\dTo\\
     && \H^a(\hat A, \mu_n^{\tensor r}) & \rTo &  \H^a(K\widehat(X), \mu_n^{\tensor r})
  \end{diagram}
  and $\alpha_0$, viewed as an element of $\H^a(K\widehat(X),
  \mu_n^{\tensor r})$, is obtained by following the path given by
  $U_0$, $k(X_0)$, $\hat A$, and $K\widehat(X)$, while $\alpha|_{K\widehat(X)}$
  can be obtained by following the path given by $U_0$, $U$, $K(X)$,
  and $K\widehat(X)$.  Both paths yield the same element, so this completes
  the proof.
\end{proof}

\subsection{The index does not change}
In section \ref{splitting} we constructed $s = s_{V, \pi}\colon
\H^a(K\widehat(X), \mu_n^{\tensor r}) \to \H^a(K(X), \mu_n^{\tensor r})$
a map splitting the restriction.  In particular, since
$$\Br(K(X))' = \injlim_{n \not \equiv 0 \pmod p} \H^2(K(X), \mu_n)$$
and similarly for $\Br(K\widehat(X))'$, we automatically obtain a map
$$s = s_{V, \pi} \colon \Br(K\widehat(X))' \to \Br(K(X))'$$
that also splits the restriction.  In this section we show that this
map preserves the index.  First let us recall some facts about Brauer
groups of regular schemes.

\begin{lemma} Let $X$ be an integral regular scheme of dimension at
  most~2. \label{Brauerlemma}
  \begin{enumerate}
    \def\theenumi{\roman{enumi}}

  \item The Brauer group $\Br(X)$ of classes of Azumaya algebras on
    $X$ coincides with the cohomological Brauer group $\H^2(X, \GG_m)$.

  \item There is an exact sequence
    $$0 \rTo \Br(X)' \rTo \Br(K(X))'
        \rTo^{\bigoplus \partial_D} \bigoplus_D \H^1(K(D), \QQ/\ZZ)'
    $$
    where $D$ runs over all irreducible Weil (or Cartier) divisors of
    $X$.

  \item If $X$ is projective over a henselian ring $(A, \ideal{m}, k)$
    and the special fiber $X_0 \df = X \times_{\Spec A} \Spec k$ has
    dimension at most~1 then
    $$\Br(X) = \Br(X_0)$$
    In particular, if $X_0$ is a projective smooth curve over a finite
    field $k$ then both groups are trivial.
  \end{enumerate}
\end{lemma}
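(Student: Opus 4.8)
The plan is to treat the three parts separately, using throughout that $X$ is regular, integral, and of dimension at most $2$. For (i), recall that for any scheme there is a canonical injection $\Br(X)\hookrightarrow\H^2(X,\GG_m)$ landing in the torsion subgroup, so only the reverse inclusion needs work. I would first note that $\H^2(X,\GG_m)$ is already torsion: for $X$ regular integral the restriction $\H^2(X,\GG_m)\to\H^2(\Spec K(X),\GG_m)=\Br(K(X))$ to the generic point is injective by a theorem of Grothendieck, and $\Br(K(X))$ is torsion. It then remains to represent each cohomology class by an Azumaya algebra, and for regular schemes of dimension at most $2$ this representability is a theorem of Grothendieck, which I would invoke (see also \cite{Milne} IV.2).

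For (ii), part (i) identifies $\Br(X)'$ with $\H^2(X,\GG_m)'$, and the injectivity into $\Br(K(X))'$ is the generic-point injectivity above restricted to the prime-to-$p$ part. For each irreducible divisor $D$ the composite with $\partial_D$ vanishes, since a class extending over $X$ is unramified at $v_D$; hence the image of $\Br(X)'$ lies in $\ker(\bigoplus_D\partial_D)$. The reverse inclusion is purity for the Brauer group: a class of $\Br(K(X))'$ with vanishing residue at every codimension-one point extends over $X$. I would prove this one prime $\ell\ne p$ at a time, passing to $\mu_{\ell^m}$-coefficients via the Kummer sequence and using the localization (Gysin) sequences of Lemma~\ref{gysinlemma} to recognize the classes with trivial residues as exactly those coming from $\H^2(X,\mu_{\ell^m})$; taking the limit over $m$ and the sum over $\ell$ returns the statement for $\GG_m$. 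The exactness used here is the purity theorem for regular surfaces.

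For (iii), by (i) the assertion becomes the statement that restriction induces an isomorphism of cohomological Brauer groups $\H^2(X,\GG_m)\cong\H^2(X_0,\GG_m)$. The key tool is proper base change over the henselian ring $A$: for any torsion sheaf $\mathcal F$ one has $\H^q(X,\mathcal F)\cong\H^q(X_0,\mathcal F_0)$ (\cite{Milne} VI.2.7). Applying this with $\mathcal F=\mu_n$ for $n$ prime to $p$, together with the two Kummer sequences and the resulting five-lemma comparison of Picard groups modulo $n$, gives the isomorphism on prime-to-$p$ torsion; the full comparison is Grothendieck's invariance of the Brauer group under passage to the closed fiber of a proper scheme over a henselian local ring. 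For the final assertion, $X_0$ is a smooth projective curve over the finite field $k$, so $\Br(X_0)\hookrightarrow\Br(k(X_0))$ with image the everywhere-unramified classes; these vanish by class field theory for the global field $k(X_0)$, whence $\Br(X_0)=0$ and therefore $\Br(X)=0$.

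The main obstacle is the comparison in (iii). Its prime-to-$p$ part is a clean consequence of \'etale proper base change, but $\GG_m$ is not a torsion sheaf, so the full equality is not immediate: it requires either a separate comparison of Picard groups together with control of the $p$-primary part of $\H^2(X,\GG_m)$, or an appeal to Grothendieck's henselian invariance of the Brauer group. By contrast (i) and (ii) rest on standard representability and purity results for regular surfaces, which I would cite rather than reprove.
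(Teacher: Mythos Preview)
Your proposal is correct and follows essentially the same route as the paper: both invoke Grothendieck's representability for (i), injectivity at the generic point plus purity of the Brauer group for (ii), and Grothendieck's henselian invariance of $\Br$ (together with class field theory for the curve over a finite field) for (iii). The paper simply cites \cite{Milne} IV.2.16, IV.2.6, IV.2.18(b), and \cite{GB} 3.1 directly, whereas you unpack the ingredients; one small caveat is that Lemma~\ref{gysinlemma} is stated only for smooth curves over complete discrete valuation rings, so for (ii) in the generality of an arbitrary regular surface you should cite purity directly rather than that lemma.
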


\begin{proof}
  For (i), see \cite{Milne} IV.2.16.  The injectivity of $\Br(X) \to
  \Br(K(X))$ in (ii) is proven in \cite{Milne} IV.2.6, while the exactness in
  the middle term follows from the purity of the Brauer group (see
  \cite{AG} 7.4 or \cite{Milne} IV.2.18~(b), and also \cite{Sa08}, Lemma 6.6).
Finally (iii) is \cite{GB} 3.1
  (see also \cite{CTOP} 1.3 for a proof using proper base change in
  the prime to $p$ case), together with the fact that for any
  projective smooth curve $C$ over a finite field we have $\Br(C) =
  0$, as follows by comparing the sequence in (ii) with the one from
  Class Field Theory (see \cite{GilleSzamuely} 6.5):
  $$0 \rTo \Br(K(C))\rTo^{\bigoplus \partial_P}
    \bigoplus_{P\in C^{(1)}} \H^1(K(P), \QQ/\ZZ) \rTo^{\sum} \QQ/\ZZ \rTo 0
  $$
  (here $P$ runs over all irreducible Weil divisors of $C$, namely,
  over all its closed points).
\end{proof}

Now we are ready to show

\begin{theorem} The map
$$s = s_{V, \pi} \colon \Br(K\widehat(X))' \to \Br(K(X))'$$
preserves the index.\label{t5}
\end{theorem}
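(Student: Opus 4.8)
The plan is to establish the two divisibilities $\ind(\beta)\mid\ind(s\beta)$ and $\ind(s\beta)\mid\ind(\beta)$ for every $\beta\in\Br(K\widehat(X))'$. The first is immediate: since $\res\circ s=\id$ by the previous theorem and the index cannot increase under the scalar extension $K(X)\hookrightarrow K\widehat(X)$, we have $\ind(\beta)=\ind(\res(s\beta))\mid\ind(s\beta)$. The real content is the reverse divisibility, and for this I would exhibit a splitting field of $s\beta$ over $K(X)$ whose degree equals $\ind_{K\widehat(X)}(\beta)$.

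First I would expose the structure of $\beta$ through the Witt decomposition from the preceding proof: write $\beta=\alpha_0+\chi_0.(\pi)$ with $\alpha_0\in\H^2(k(X_0),\mu_n)$ and $\chi_0\in\H^1(k(X_0),\ZZ/n)$, so that $s\beta=\alpha+\chi.(\pi)$ with $\alpha=\lambda_V(\alpha_0)$ and $\chi=\lambda_V(\chi_0)$. Let $e=\mathrm{ord}(\chi_0)$ and let $E_0/k(X_0)$ be the cyclic extension cut out by $\chi_0$, so $\chi_0|_{E_0}=0$. As $E_0$ is a global field, the class $(\alpha_0)_{E_0}$ has a splitting field $M_0\supseteq E_0$ with $[M_0:E_0]=\ind_{E_0}((\alpha_0)_{E_0})=:d$, and then $(\alpha_0)_{M_0}=0$ and $\chi_0|_{M_0}=0$. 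The one external input I would invoke here is the standard formula for the index of a tamely ramified class over a complete discretely valued field, which gives $\ind_{K\widehat(X)}(\beta)=e\cdot d=[M_0:k(X_0)]$; equivalently, the unramified extension of $K\widehat(X)$ with residue field $M_0$ is a minimal splitting field of $\beta$.

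Next I would transport $M_0$ to the generic fiber. After enlarging $D_0$ so that it contains the ramification loci of $\alpha_0$ and $\chi_0$ together with the branch locus of $M_0/k(X_0)$, and observing that $[M_0:k(X_0)]=ed$ is prime to $p$ so that $M_0/k(X_0)$ is tamely ramified, the normalization $W_0\to X_0$ of $X_0$ in $M_0$ is a tamely ramified cover of $(X_0,D_0)$. By Grothendieck's equivalence (Theorem \ref{equivcat}) it lifts to a tamely ramified cover $\rho\colon W\to X$ of $(X,D)$, with $D$ the $V$-lift of $D_0$; here $W$ is again a smooth integral projective curve over $R$ with integral special fiber $W_0$ satisfying $k(W_0)=M_0$, so $[K(W):K(X)]=[M_0:k(X_0)]=ed$ by Lemma \ref{marklemma}(vi). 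Moreover $\rho$ is \'etale at the generic point of $X_0$, so $K\widehat(W)$ is the unramified extension of $K\widehat(X)$ with residue field $M_0$, and therefore $\beta|_{K\widehat(W)}=(\alpha_0)_{M_0}+(\chi_0|_{M_0}).(\pi)=0$.

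Finally I would show that $K(W)$ splits $s\beta$. Choosing the marks on $W$ compatibly, namely taking the reduced preimages $(\rho^{-1}D')_{\rm red}$ of the members $D'$ of $V$ (which are marks by Lemma \ref{specialval}(ii)) as the lifts of the corresponding closed points of $W_0$, the tame lifting $\lambda_V$ becomes functorial along $\rho$: by the naturality of the proper base change isomorphisms and Gysin sequences used in Lemma \ref{tamelift}, one has $\lambda_V(\xi)|_{K(W)}=\lambda_{V'}(\xi|_{M_0})$ for every class $\xi$ over $k(X_0)$, where $V'$ is the chosen mark set on $W$. Applying this to $\alpha_0$ and $\chi_0$ gives $s\beta|_{K(W)}=s_{V',\pi}(\beta|_{K\widehat(W)})=s_{V',\pi}(0)=0$, so $K(W)$ splits $s\beta$ and $\ind_{K(X)}(s\beta)\mid[K(W):K(X)]=ed=\ind_{K\widehat(X)}(\beta)$. (Alternatively one could verify directly that $s\beta|_{K(W)}$ is unramified at every divisor of $W$—trivially so along $W_0$ since $\beta|_{K\widehat(W)}=0$, and along the relative divisors over $D$ by Lemmas \ref{tamelift} and \ref{specialval}(iii)—and then invoke $\Br(W)=\Br(W_0)=0$ from Lemma \ref{Brauerlemma}(iii) to conclude $s\beta|_{K(W)}\in\Br(W)'=0$.) Together with the easy direction this forces $\ind(s\beta)=\ind(\beta)$. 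I expect the main obstacle to be precisely this last functoriality statement for $\lambda_V$ under the cover $\rho$—pinning down a compatible choice of marks and checking that the base-change identifications commute with restriction along $\rho$—with the complete-field index formula of the second paragraph as the other essential, though standard, ingredient.
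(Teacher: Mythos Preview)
Your overall strategy matches the paper's: compute $\ind\beta$ via the Nakayama--Witt formula, lift a degree-$\ind\beta$ splitting field from $k(X_0)$ to $K(X)$ using Theorem~\ref{equivcat}, and verify that the lift splits $s\beta$. The execution differs in two respects. First, the paper invokes Albert--Brauer--Hasse--Noether to choose the splitting field of $\alpha_0|_{k(Y_0)}$ \emph{cyclic} and then lifts in two stages (first $Y_0\to X_0$, then $Z_0\to Y_0$), using Lemma~\ref{uglylemma} to identify the first lift $Y$ with the cover cut out by $\chi$ and thereby kill $(\chi,\pi)$ before dealing with $\alpha$; you take an arbitrary separable splitting field and lift in a single step. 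Second, to show the lifted field splits the class, the paper runs an explicit residue computation at every horizontal divisor (the four-box diagram combining Lemma~\ref{tamelift} with Lemma~\ref{specialval}(iii)) and then appeals to $\Br(Z)=0$; you instead package everything as the functoriality statement $\lambda_V(\xi)|_{K(W)}=\lambda_{V'}(\xi|_{M_0})$, which handles $\alpha$ and $\chi$ simultaneously. That functoriality is correct---it follows, as you say, from the naturality of the restriction-to-special-fiber isomorphisms underlying Lemma~\ref{tamelift}---and once established it gives a cleaner argument that avoids both ABHN and the two-stage lift. Your parenthetical alternative is precisely the paper's route.
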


\begin{proof} Let $n$ be prime to $p$.  Given an arbitrary element
$$\hat \gamma = \alpha_0 +  (\chi_0,\pi) \in
  {}_n\hspace{-.03in}\Br(K\widehat(X)) = \H^2(K\widehat(X), \mu_n),
$$
where $\alpha_0 \in {}_n\hspace{-.03in}\Br(k(X_0)) = \H^2(k(X_0), \mu_n)$ and
$\chi_0\in \H^1(k(X_0), \ZZ/n)$, let
$$\gamma = s (\hat \gamma) = \alpha +  (\chi,\pi) \in
  {}_n\hspace{-.03in}\Br(K(X)) = \H^2(K(X), \mu_n)
$$
where $\alpha = \lambda_V (\alpha_0) \in \H^2(K(X),\mu_n)$ and $\chi = \lambda_V(\chi_0) \in \H^1(K(X), \ZZ/n)$ are the
tame lifts of $\alpha_0$ and $\chi_0$.

Since $\res \circ s = \id$, we have that $\res \gamma = \hat \gamma$
and therefore $\ind \hat \gamma  \mid \ind \gamma$.  To prove that
$\ind \gamma \mid \ind \hat \gamma$ we now construct a splitting field
for $\gamma$ of degree $\ind \hat \gamma$ over $K(X)$.

The character $\chi_0$ defines a cyclic extension $L$ of $k(X_0)$ of
degree equal to the order $|\chi_0|$.  Since $k$ is perfect, the normalization $Y_0$ of $X_0$ in $L$ is a smooth curve over $k$, tamely ramified over
$X_0$ (since $|\chi_0|$ is prime to $p =\chara k$) along some mark
$D_0$ of $X_0$ (the ramification locus of $\chi_0$).  By the
Nakayama-Witt index formula (see \cite{JW} 5.15(a)) we have that
$$\ind \hat \gamma = |\chi_0| \cdot \ind (\alpha_0|_{k(Y_0)})$$
But since $k(Y_0)$ is a global field, the Albert-Brauer-Hasse-Noether 
theorem (\cite{neukirch}, Corollary 9.2.3, p. 461, and the functoriality of Corollary 9.1.8, p. 458)
tells us that $\alpha_0|_{k(Y_0)}$ is cyclic, hence there is a cyclic
extension of $k(Y_0)$ of degree $\ind (\alpha_0|_{k(Y_0)})$ that
splits $\alpha_0|_{k(Y_0)}$.  Corresponding to this extension there is
a cyclic cover $Z_0$ of $Y_0$, tamely ramified along some mark $E_0$
of $Y_0$.

Let $D\subset X$ be the $V$-lift of $D_0$.  Let $\rho\colon Y \to X$
be the tamely ramified cover of $(X, D)$ lifting the tamely ramified
cover $\rho_0 \colon Y_0 \to X_0$ of $(X_0, D_0)$, as in
Theorem~\ref{equivcat}.  Now by Lemma~\ref{specialval}~(ii) the set
$(\rho^{-1} V)_{\rm red}$ defines a choice of marks on $Y$ lifting the
closed points of $Y_0$.  Let $E$ be the mark on $Y$ that lifts $E_0$ and whose
support belongs to $(\rho^{-1} V)_{\rm red}$.  Finally define
$\sigma\colon Z \to Y$ to be the tamely ramified cover of $(Y, E)$
lifting the tamely ramified cover $\sigma_0\colon Z_0 \to Y_0$ of
$(Y_0, E_0)$.  Since
\begin{eqnarray*}
[K(Z): K(X)] &=& [K(Z): K(Y)] \cdot [K(Y): K(X)]\\
  &=& [k(Z_0): k(Y_0)] \cdot [k(Y_0): k(X_0)]\\
  &=& \ind (\alpha_0|_{k(Y_0)}) \cdot |\chi_0|\\
  &=& \ind \hat \gamma
\end{eqnarray*}
it is enough to show that $K(Z)$ splits $\gamma$.

Since $Z$ is integral and regular of dimension~2, to show that
$\gamma|_{K(Z)} = 0$ it is enough to show, by Lemma \ref{Brauerlemma},
that $\gamma|_{K(Z)}$ is unramified with respect to all Weil divisors
on $Z$.  On the other hand, $K(Y)$ splits $\chi$ by
Lemma~\ref{uglylemma}, hence $\gamma|_{K(Z)} = \alpha|_{K(Z)}$ and it
remains to show $\alpha|_{K(Z)}$ is unramified with respect to the
Weil divisors on $Z$.  Moreover, by the construction of $\lambda_V$ in
the proof of Lemma \ref{tamelift}, $\alpha \in \H^2(U, \mu_n)$ for
some open set $U \subset X$ that is the complement of a mark with
support in $V$.  Consequently, $\alpha$ only ramifies along marks in
$V$.

Let $D'=D\cup \rho(E)$ where $\rho(E)$ is the image of the mark
  $E$.  By our choice of $E$, $\rho(E) \subset V$ and hence $D'
  \subset V$.  We now have that the composition $\rho\circ \sigma
  \colon Z \to X$ is a tamely ramified cover of $(X,D')$, which is
  finite and flat (Lemma~\ref{specialval}~(i)).  Therefore the image
  of any irreducible Weil divisor $F$ in $Z$ is also a Weil divisor
  $G$ in $X$ by \cite{Liu} IV.3.14 (that is, it cannot ``contract'' to
  a closed point).  Moreover if $G \subset V$ then since $D' \subset
  V$ either $G\subset D'$ or $G\cap D'=\emptyset$, and by
  Lemma~\ref{specialval}~(ii) $F$ is also a mark.  Therefore, since
  the ramification locus of $\alpha$ is contained in $V$, it is enough
  to show that $\alpha|_{K(Z)}$ is unramified at all marks lying over
  marks in $V$. 

Let $F$ be an irreducible mark on $Z$ lying over an irreducible
  mark $G$ on $X$ whose support belongs to $V$.  Since $G\subset D'$
  or $G\cap D'=\emptyset$, by Lemma \ref{specialval}~(iii) the
  ramification degree $e$ of $v_{F}$ over $v_G$ equals the
  ramification degree of $v_{F_0}$ over $v_{G_0}$.  By
  Lemma~\ref{tamelift} and the functorial behavior of residue
  maps under finite extensions, we have a diagram, commutative up to
  sign,

\begin{diagram}
{}_n\hspace{-.03in}\Br(k(X_0))& \rTo^{\lambda_V}& {}_n\hspace{-.03in}\Br(K(X))& \rTo^{\res}& {}_n\hspace{-.03in}\Br(K(Z))\\
\dTo^{\partial_{G_0}} &\text{\fbox{1}}& \dTo^{\partial_G} & \text{\fbox{2}}& \dTo_{\partial_F}\\
\H^1(k(G_0), \ZZ/n)&\rInto &\H^1(K(G), \ZZ/n) & \rTo^{e \cdot \res}& \H^1(K(F), \ZZ/n)\\
   && \uInto  &\text{\fbox{3}}& \uInto \\
&& \H^1(k(G_0), \ZZ/n) & \rTo^{e \cdot \res} & \H^1(k(F_0), \ZZ/n)\\
   && \uTo^{\partial_{G_0}}  &\text{\fbox{4}}& \uTo_{\partial_{F_0}} \\
&& {}_n\hspace{-.03in}\Br(k(X_0)) &\rTo^{\res} & {}_n\hspace{-.03in}\Br(k(Z_0))
\end{diagram}
Here we view $\H^1(k(G_0), \ZZ/n) = \H^1(G, \ZZ/n)$ as the subgroup of
unramified characters of $\H^1(K(G), \ZZ/n)$, and similarly
$\H^1(k(F_0), \ZZ/n) = \H^1(F, \ZZ/n) \subset \H^1(K(F), \ZZ/n)$.

If $\alpha_0 \in {}_n\hspace{-.03in}\Br(k(X_0))$, we
obtain
$$\partial_F (\alpha|_{K(Z)}) = \pm e \cdot \partial_{G_0} (\alpha_0) |_{K(F)}
  \in \H^1(K(F), \ZZ/n)
$$
from squares \fbox{1} + \fbox{2}, and we obtain 
$$\partial_{F_0} (\alpha_0|_{k(Z_0)}) = \pm e \cdot \partial_{G_0} (\alpha_0) |_{k(F_0)}
  \in \H^1(k(F_0), \ZZ/n)
$$
from square \fbox{4}.  Hence $\partial_F (\alpha|_{K(Z)}) = \pm \partial_{F_0} (\alpha_0|_{k(Z_0)})$ by square \fbox{3},
which vanishes since $\alpha_0|_{k(Z_0)} = 0$, and we are
done. 
\end{proof}

\section{Indecomposable and noncrossed product division algebras.}
Adopt all notation from Sections 1-3.  In this section we construct indecomposable division algebras over $K(X)$ and noncrossed product algebras over $K(X)$ of prime power index for all primes $q$ with $q\ne p$.  Note that noncrossed product division algebras with index equal to period over $K(X)$ for $X=\P^1_K$ are already known to exist by \cite{Brussel4}.

\subsection{Indecomposable Division Algebras over $K(X)$.}\label{indecomposables}

We construct indecomposable division algebras over $K(X)$ by constructing them over $K\widehat(X)$ and using the splitting $s:\Br(K\widehat(X))'\to \Br(K(X))'$ from Theorem \ref{t5} to lift the Brauer classes to Brauer classes over $K(X)$ whose underlying division algebras are indecomposable.  The construction over $K\widehat(X)$ follows the method in \cite{Brussel6}, where indecomposable division algebras of unequal prime-power index and period are shown to exist over power series fields over number fields.

We start by stating a well known lemma on the invariants of a Brauer class of a global field after a finite extension.  This lemma is helpful in computing the index reduction of the Brauer class after the finite extension.  

\begin{lemma}[see \cite{lf}, XIII, \S3]  Let $\beta \in \Br(F)$ be a Brauer class over a global field $F$.  Let $L/F$ be a finite Galois extension.  Then for all discrete valuations $w$ in $L$ lying over a fixed prime $v$ of $F$, $\inv_{w}(\beta_L) =e_{v} f_{v}\inv_{v}(\beta)$.\label{l1}
\end{lemma}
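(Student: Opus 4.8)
The plan is to reduce the assertion to a purely local computation at the completions and then invoke the fundamental invariant formula of local class field theory. First I would pass to completions: fix a place $w\mid v$, write $F_v$ and $L_w$ for the completions of $F$ at $v$ and of $L$ at $w$, and let $\beta_v \in \Br(F_v)$ be the localization of $\beta$, so that by definition $\inv_v(\beta) = \inv_v(\beta_v)$. The quantity $\inv_w(\beta_L)$ is the local invariant of the $w$-localization of $\beta_L$, and since completion commutes with base change we have the compatibility
$$
(\beta_L)_w \;=\; \beta \otimes_F L_w \;=\; \res_{L_w/F_v}(\beta_v).
$$
Thus the entire question is reduced to describing how the restriction map $\Br(F_v)\to\Br(L_w)$ acts on invariants.

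The key input is the standard local class field theory computation (see \cite{lf}, XIII): for a finite extension $L_w/F_v$ of local fields, the restriction map multiplies the $\Q/\Z$-valued invariant by the local degree, i.e.
$$
\inv_w\bigl(\res_{L_w/F_v}(\beta_v)\bigr) \;=\; [L_w:F_v]\cdot \inv_v(\beta_v).
$$
Combining this with the compatibility above yields $\inv_w(\beta_L) = [L_w:F_v]\,\inv_v(\beta)$. It then remains to identify the local degree $[L_w:F_v] = e_{w/v}\,f_{w/v}$ as the product of the ramification and residue degrees of $w$ over $v$, which is the usual degree formula for extensions of local fields.

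Finally I would invoke the Galois hypothesis to justify writing $e_v,f_v$ in place of $e_{w/v},f_{w/v}$: since $L/F$ is \emph{Galois}, $\Gal(L/F)$ permutes the places $w\mid v$ transitively, so they all share a common ramification degree $e_v$ and residue degree $f_v$, and the displayed equality holds uniformly for every $w\mid v$. I do not expect a genuine obstacle here; the only point needing a word of care is the compatibility $(\beta_L)_w = \res_{L_w/F_v}(\beta_v)$, i.e. that localizing-then-restricting agrees with restricting-then-localizing, which is immediate from the functoriality of $\Br(\,\cdot\,)$ along the tower $F \to F_v \to L_w$ factoring $F \to L \to L_w$. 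The substance of the lemma is therefore entirely the local invariant-degree formula $\inv_w\circ\res = [L_w:F_v]\cdot\inv_v$, everything else being bookkeeping.
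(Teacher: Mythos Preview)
Your argument is correct and is precisely the standard proof: reduce to completions, apply the local invariant formula $\inv_w\circ\res_{L_w/F_v}=[L_w:F_v]\cdot\inv_v$, and identify $[L_w:F_v]=e_vf_v$ using the Galois hypothesis for uniformity over all $w\mid v$. The paper itself gives no proof of this lemma; it merely records the statement and cites \cite{lf}, XIII, \S3, which is exactly the local computation you invoke.
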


We now construct indecomposable division algebras over $K\widehat(X)$.

\begin{proposition}
Let $e$ and $i$ be integers satisfying $1\leq e\leq i \leq 2e-1$.  For any prime $q\ne \chara\,k$ there exists a Brauer class $\hat \gamma \in \Br(K\widehat(X))$ satisfying $(\ind(\hat \gamma),\per(\hat \gamma))=(q^i,q^e)$ and whose underlying division algebra is indecomposable.  
\label{Proposition4.1}
\end{proposition}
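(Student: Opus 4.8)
The plan is to construct $\hat\gamma$ explicitly over $K\widehat(X)$ following the Brauer-decomposition recipe suggested by the Witt sequence used in Theorems above. Since each element of $\Br(K\widehat(X))$ decomposes uniquely as $\alpha_0 + (\chi_0,\pi)$ with $\alpha_0 \in \Br(k(X_0))$ and $\chi_0 \in \H^1(k(X_0),\ZZ/n)$ (where $n = q^e$), I would take $\chi_0$ to be a character of exact order $q^e$ cutting out a cyclic extension $L/k(X_0)$ of degree $q^e$, and choose $\alpha_0 \in \Br(k(X_0))$ so that its restriction $\alpha_0|_L$ to the corresponding cover has index exactly $q^{i-e}$. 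This is where the constraints $1 \le e \le i \le 2e-1$ come from: the Nakayama--Witt index formula (as in \cite{JW} 5.15(a)) gives $\ind(\hat\gamma) = |\chi_0|\cdot\ind(\alpha_0|_L) = q^e \cdot q^{i-e} = q^i$, while the period computation must yield $\per(\hat\gamma) = q^e$. Since $k(X_0)$ is a global field, I can realize any prescribed local invariants for $\alpha_0$ at the (infinitely many) places of $k(X_0)$, subject only to the sum-zero condition from class field theory, so producing an $\alpha_0$ with the exact index behavior over $L$ is a matter of choosing invariants carefully using Lemma~\ref{l1}.

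The substantive part is arranging \emph{indecomposability}, and this is where I would adapt the argument of \cite{Brussel6}. The strategy is to pin down the ramification data of $\hat\gamma$ precisely and argue that any nontrivial tensor factorization $\hat\gamma = \hat\gamma_1 + \hat\gamma_2$ over $K\widehat(X)$ would force a contradiction with the computed index-versus-period. Concretely, I would compute $\partial_{X_0}(\hat\gamma) = \chi_0$ and read off that the ramification of $\hat\gamma$ at the special fiber is governed by the character $\chi_0$ of order $q^e$. The key numerical obstruction is that $i < 2e$: if $\hat\gamma$ split as a tensor product of two division algebras of strictly smaller index, each factor's period would have to be a power of $q$ no larger than $q^{e-1}$ (since the period is $q^e$ and one factor carries the full period while the complementary factor's period divides it), and comparing indices through the residue and the Nakayama--Witt formula on each piece would produce an index strictly smaller than $q^i$, contradicting $\ind(\hat\gamma)=q^i$. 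Making this precise requires tracking how the residue character and the ``constant'' part $\alpha_0$ distribute among the factors.

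The main obstacle I anticipate is controlling the index of $\alpha_0$ after arbitrary finite extensions that might appear in a hypothetical decomposition, not merely after the specific cyclic extension $L$. To handle this I would choose the local invariants of $\alpha_0$ at a small, carefully selected set of primes of $k(X_0)$ (for instance two primes that are totally ramified or inert in $L$ with complementary invariants) so that the index reduction behavior is rigid: Lemma~\ref{l1} lets me compute $\inv_w(\alpha_0|_M)$ for any finite Galois $M$ in terms of local ramification and residue degrees $e_v f_v$, and by picking primes where these are forced I can guarantee $\ind(\alpha_0|_L) = q^{i-e}$ while preventing the index from dropping under the subfields that a decomposition would require. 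This ``local rigidity'' of the global class $\alpha_0$, combined with the clean splitting $\hat\gamma = \alpha_0 + (\chi_0,\pi)$ at the special fiber, is exactly the mechanism of \cite{Brussel6}, and the role of the hypothesis $i \le 2e-1$ is to ensure the index gap $q^{i-e}$ stays strictly below the period $q^e$, which is what makes the class indecomposable rather than a product of a symbol with a smaller-period piece.

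Having produced $\hat\gamma$ over $K\widehat(X)$, I would then invoke Theorem~\ref{t5}: the splitting $s\colon \Br(K\widehat(X))' \to \Br(K(X))'$ is index-preserving, so $\gamma = s(\hat\gamma)$ has $(\ind\gamma,\per\gamma) = (q^i,q^e)$, and since the period and index are intrinsic to the underlying division algebra, the indecomposability transfers as well—any factorization of the division algebra attached to $\gamma$ over $K(X)$ would restrict (via $\res$, which is a section inverse to $s$) to a factorization over $K\widehat(X)$ of the same index type, contradicting the indecomposability of $\hat\gamma$. This last transfer step is the payoff of the machinery built in the preceding sections and should be essentially formal once the construction over $K\widehat(X)$ is complete.
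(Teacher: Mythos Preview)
Your construction of $\hat\gamma = \alpha_0 + (\chi_0,\pi)$ with $|\chi_0|=q^e$ and your plan to compute the index via Nakayama--Witt is exactly what the paper does. The divergence, and the gap, is in the indecomposability argument.

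The paper does \emph{not} analyze hypothetical tensor factorizations directly. Instead it invokes Saltman's numerical criterion \cite[Lemma~3.2]{Saltman-Indecomposable}: if $\ind(q\hat\gamma) = \ind(\hat\gamma)/q$, then the underlying division algebra is indecomposable. So the entire proof reduces to two index computations, $\ind(\hat\gamma)=q^{2e-t}$ and $\ind(q\hat\gamma)=q^{2e-t-1}$, both done via Nakayama--Witt plus Lemma~\ref{l1}. To make this work the paper arranges the local data so that both computations go through simultaneously: $\alpha_0$ has invariants $\pm 1/q^e$ at exactly two places $v_1,v_2$, and Grunwald--Wang is used to pick $\chi_0$ (called $\theta_0$) of global order $q^e$ whose local restrictions at $v_1,v_2$ are \emph{unramified} of order exactly $q^t$ (where $i=2e-t$). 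Then $\ind(\alpha_0|_{k(X_0)(\chi_0)})=q^{e-t}$ and $\ind(q\alpha_0|_{k(X_0)(q\chi_0)})=q^{e-t}$ as well, giving the required drop by exactly one power of $q$.

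Your proposed route---split $\hat\gamma=\hat\gamma_1+\hat\gamma_2$ and derive a contradiction by tracking residues---is where things go wrong. Your sentence ``each factor's period would have to be a power of $q$ no larger than $q^{e-1}$'' is false (one factor must have period $q^e$), and although your parenthetical walks this back, the subsequent claim that ``comparing indices \ldots\ would produce an index strictly smaller than $q^i$'' is unsupported. You yourself flag the obstacle: you would need to control $\ind(\alpha_0|_M)$ for \emph{all} relevant intermediate $M$, not just the single cyclic $L=k(X_0)(\chi_0)$. That is genuinely hard without further structure, and it is precisely what Saltman's criterion lets you avoid---you only need the two explicit index computations above. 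Replace your factorization analysis with this criterion and the proof becomes short and complete.

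Finally, your last paragraph (lifting to $K(X)$ via $s$) is the content of Theorem~\ref{t6}, not Proposition~\ref{Proposition4.1}; it does not belong in this proof.
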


\proof Let $1\leq t\leq e$ so that $i=2e-t$.  To prove the proposition we produce a Brauer class $\hat \gamma \in \Br(K\widehat(X))$ such that $(\ind(\hat \gamma),\per(\hat \gamma))=(q^{2e-t},q^e)$ and $\ind(q\hat \gamma)=q^{2e-t-1}$.  Since $\ind(\hat \gamma)=q^{2e-t}$ and $\ind(q\hat\gamma)=q^{2e-t-1}$, by \cite[Lemma 3.2]{Saltman-Indecomposable} the division algebra underlying $\hat \gamma$ is indecomposable.  Choose two closed points $x_1$, $x_2 \in X_0$.  Let $v_1$ and $v_2$ be the discrete valuations on $k(X_0)$ corresponding to $x_1$ and $x_2$.  Let $\alpha_0 \in \Br(k(X_0))$ be the Brauer class whose invariants are
\[\begin{array}{ll}
\inv_{v_1}(\alpha_0)=1/q^e\\
\inv_{v_2}(\alpha_0)=-1/q^e
\end{array}\]
and at all other discrete valuations $v$ on $k(X_0)$, $\partial_v(\alpha_0)=0$.  The Brauer class $\alpha_0$ exists by Hasse's residue theorem (\cite[6.5.4]{GilleSzamuely}) and the fact that $k(X_0)$ is a global field.   Let $\xi_{v_i}=\partial_{v_i}(\alpha_0) \in \H^1(k(v_i),\Q/\Z)$.  Let $k(X_0)_{v_i}$ be the completion of $k(X_0)$ at the valuation $v_i$ and choose unramified characters $\theta_{v_i} \in \H^1(k(X_0)_{v_i},\Q/\Z)$ of order $q^t$.  By the Grunwald-Wang theorem there exists a global character $\theta_0\in \H^1(k(X_0),\Q/\Z)$ of order $q^e$ with restrictions $\theta_{v_i}$ at $v_i$ for $i=1,2$.  

Set $\hat\gamma=\alpha_0+(\theta_0,\pi) \in \Br(K\widehat(X))$, an element with period $q^e$.  We claim that $\ind(\hat\gamma)=q^{2e-t}$ and $\ind(q\hat\gamma)=q^{2e-t-1}$.  By the Nakayama-Witt index formula (see \cite{JW}
5.15(a)) we have $\ind \hat \gamma = |\theta_0| \cdot \ind (\alpha_0|_{k(X_0)(\theta_0)})$ where $\alpha_0|_{k(X_0)(\theta_0)}$ is the restriction of $\alpha_0$ to $k(X_0)(\theta_0)$, the finite extension defined by the character $\theta_0$.  By construction, $|\theta_0|=q^e$ so it is only left to show that $\ind(\alpha_0|_{\theta_0})=q^{e-t}$.  Since $k(X_0)(\theta_0)$ is a finite extension of $k(X_0)$, $k(X_0)(\theta_0)$ is a global field and

\[\ind(\alpha_0|_{k(X_0)(\theta_0)})=\per(\alpha_0|_{k(X_0)(\theta_0)})=\lcm_{w}(|\mathrm{inv}_{w}(\alpha_0|_{k(X_0)(\theta_0)})|)\]
where the least common multiple is taken over all discrete valuations $w$ of $k(X_0)(\theta_0)$.  This shows, by our assumptions on $\alpha_0$, that for all discrete valuations $w$ of $k(X_0)(\theta_0)$,

\[\mathrm{inv}_{w}(\alpha_0|_{k(X_0)(\theta_0)})=\left\{\begin{array}{cl}0, &\textrm{ if }w\textrm{ does not lie over }v_i\textrm{ for }i=1,2\\ 
{\pm|(\theta_0)_{v_i}|}\cdot{q^{-e}}
, & \textrm{ if }w\textrm{ lies over } v_i\textrm{ for }i=1,2 \end{array}\right.\]

By our assumption on $\theta_0$, $|(\theta_0)_{v_i}|=q^t$ for $i=1,2$ and therefore we have $\ind(\alpha_0|_{k(X_0)(\theta_0)})=q^{e-t}$ and $\ind(\hat\gamma)=q^{2e-t}$.

A similar calculation for $q\hat\gamma$ gives $|q\theta_0|=q^{e-1}$ and $\ind(q\alpha_0|_{k(X_0)(q\theta_0)})=q^{e-t}$ since by the same reasoning,

\[\mathrm{inv}_{w}(q\alpha_0|_{k(X_0)(q\theta_0)})=\left\{\begin{array}{cl}0, &\textrm{ if }w\textrm{ does not lie over }v_i\textrm{ for }i=1,2\\ 
{\pm|(q\theta_0)_{v_i}|}\cdot{q^{1-e}}
, & \textrm{ if }w\textrm{ lies over } v_i \textrm{ for }i=1,2\end{array}\right.\]
and $|(q\theta_0)_{v_i}|=q^{t-1}$ for $i=1,2$.  We conclude $\ind(q\hat\gamma)=q^{2e-t-1}$.  \endproof

\begin{theorem}
Let $k$, $X_0$, $K$ and $X$ be as in Section \ref{mainsetup} and let $q$ be a prime with $q\ne \chara\,k$.  Fix integers $e$ and $i$ satisfying $1\leq e\leq i \leq 2e-1$.  Then there exists an indecomposable division algebra $D$ over $K(X)$ satisfying $(\ind(D),\per(D))=(q^i,q^e)$.
\label{t6}
\end{theorem}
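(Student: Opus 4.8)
The plan is to manufacture the algebra over the completion $K\widehat(X)$ and then transport it down to $K(X)$ through the index-preserving splitting $s = s_{V,\pi}$ of Theorem~\ref{t5}. Concretely, set $t = 2e - i$, so that the hypothesis $e \le i \le 2e-1$ is exactly the condition $1 \le t \le e$ appearing in Proposition~\ref{Proposition4.1}. Applying that proposition (with the given prime $q \ne \chara k$) yields a class $\hat\gamma \in \Br(K\widehat(X))$ with $(\ind \hat\gamma, \per \hat\gamma) = (q^{2e-t}, q^e) = (q^i, q^e)$ whose underlying division algebra is indecomposable. Since $q \ne p = \chara k$, the class $\hat\gamma$ lies in the prime-to-$p$ part $\Br(K\widehat(X))'$ on which $s$ is defined, so I set $\gamma = s(\hat\gamma) \in \Br(K(X))'$ and let $D$ be its underlying division algebra; the remaining task is to verify that $D$ has the prescribed invariants and is indecomposable.

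The invariants are immediate from the formal properties of $s$. Because $s$ preserves the index (Theorem~\ref{t5}), $\ind D = \ind \gamma = \ind \hat\gamma = q^i$. For the period, recall that $\res \circ s = \id$, so $s$ is an injective group homomorphism and hence preserves the order of every element; therefore $\per D = \per \gamma = \per \hat\gamma = q^e$. (Equivalently, $\per \gamma \mid \per \hat\gamma$ since $s$ is a homomorphism, while $\per \hat\gamma = \per(\res \gamma) \mid \per \gamma$ since $\res$ is a homomorphism.)

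The one delicate point, which I expect to be the main obstacle, is indecomposability: this is a property of the division algebra $D$ itself and is \emph{not} obviously transported by the map $s$ on Brauer classes. I would not attempt to lift a hypothetical decomposition of $D$ back up to $K\widehat(X)$, since restriction to the completion need not preserve the indices of the factors. Instead I re-verify Saltman's numerical criterion \cite[Lemma 3.2]{Saltman-Indecomposable} directly for $\gamma$, exactly as in the proof of Proposition~\ref{Proposition4.1}. The decisive observation is that this criterion depends only on the integers $\ind \gamma$ and $\ind(q\gamma)$, and both are controlled because $s$ is simultaneously a group homomorphism and index-preserving: we have $q\gamma = s(q\hat\gamma)$, whence
$$\ind(q\gamma) = \ind\bigl(s(q\hat\gamma)\bigr) = \ind(q\hat\gamma) = q^{2e-t-1} = q^{i-1}.$$
Thus $\gamma$ has period $q^e$, index $q^{2e-t}$, and $\ind(q\gamma) = q^{2e-t-1}$, so \cite[Lemma 3.2]{Saltman-Indecomposable} applies verbatim and shows that $D$ is an indecomposable division algebra over $K(X)$ with $(\ind D, \per D) = (q^i, q^e)$, completing the construction.
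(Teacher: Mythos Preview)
Your proof is correct and follows the same overall architecture as the paper: invoke Proposition~\ref{Proposition4.1} to get $\hat\gamma$ over $K\widehat(X)$, lift via $s$, and read off index and period from Theorem~\ref{t5} and $\res\circ s=\id$.

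The one genuine difference is in the indecomposability argument. The paper does precisely what you said you would avoid: it pushes a hypothetical nontrivial decomposition $\gamma=\beta_1+\beta_2$ forward to $K\widehat(X)$ via $\res$, obtaining $\hat\gamma=\res(\beta_1)+\res(\beta_2)$; since $\ind(\res\beta_j)\le\ind\beta_j$ while $\ind\hat\gamma=\ind\gamma=\ind\beta_1\cdot\ind\beta_2$, equality is forced in each factor, so the restricted decomposition is still nontrivial, contradicting indecomposability of $\hat\gamma$. Your worry that ``restriction need not preserve the indices of the factors'' is thus answered by the paper: a priori it need not, but here it must, because the product of the indices is pinned by the index-preservation of $s$. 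Your alternative---using that $s$ is a group homomorphism to get $q\gamma=s(q\hat\gamma)$, then $\ind(q\gamma)=\ind(q\hat\gamma)=q^{2e-t-1}$ from the proof of Proposition~\ref{Proposition4.1}, and reapplying Saltman's criterion---is equally valid and arguably more direct. The paper's route has the conceptual advantage of showing that \emph{any} indecomposable class in $\Br(K\widehat(X))'$ lifts to an indecomposable class via $s$, not only those detected by Saltman's numerical test; your route stays closer to the specific construction and avoids the small unwinding of why the restricted decomposition remains nontrivial.
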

\proof
Choose $e$ and $i$ so that $1\leq e\leq i \leq 2e-1$.  By Proposition \ref{Proposition4.1} there exists a Brauer class $\hat\gamma\in \Br(K\widehat(X))$ satisfying $(\ind(\hat\gamma),\per(\hat\gamma))=(q^i,q^e)$ and whose underlying division algebra is indecomposable.  By Theorem \ref{t5}, $\gamma=s(\hat\gamma)\in \Br(K(X))$ has index $q^i$.  Since $s$ is a splitting of the restriction map, we also have $\per(\gamma)=q^e$.  To finish the proof we show the division algebra underlying $\gamma$ is indecomposable.  If $\gamma={\beta_1}+ \beta_2$ with $\ind(\beta_1)\ind(\beta_2)=\ind(\gamma)$ represents a nontrivial decomposition of the division algebra underlying $\gamma$, then $\hat\gamma=\res_{K\widehat(X)}(\beta_1)+\res_{K\widehat(X)}(\beta_2)$.  Since the index can only decrease under $\res_{K\widehat(X)}$ we have $\ind(\hat\gamma)=\ind(\res_{K\widehat(X)}(\beta_1))\ind(\res_{K\widehat(X)}(\beta_2))$.  This represents a nontrivial decomposition of the division algebra underlying $\hat\gamma$, a contradiction.\endproof

\begin{remark}  In the case $X=\P^1_R$, it is not hard to construct $\hat \gamma$ which satisfies the conclusions of Proposition \ref{Proposition4.1} and can be seen to have $\ind(\hat\gamma)=\ind(s(\hat\gamma))$ without the use of Theorem \ref{t5}.  Choose $e,\,i,\,t$ so that $1\leq e \leq i \leq 2e-1$ and $i=2e-t$.  Then, as in the proof of Proposition \ref{Proposition4.1}, choose a single closed point $x_0$ in $X_0=\P^1_k$ of degree $q^{e-t}$.  Let $\xi\in \H^1(k,\ZZ/n)$ be a character of order $q^{2e-t}$ where $n$ is an integer prime to $p$ with $q^i\mid n$.  Set $\alpha_0=(\xi,\pi_{x_0})$ where $\pi_{x_0}$ is the irreducible polynomial corresponding to the closed point $x_0$.  Then, 
\[\partial_x(\alpha_0)=\left\{\begin{array}{ll}0,&\textrm{ if }x \ne x_0 \textrm{ and }x\ne\textrm{ the point at infinity}\\ \res_{k|k(x_0)}\xi,&\textrm{ if } x=x_0\end{array}\right.\]
Set $\theta_0=q^{e-t}\xi \in \H^1(k,\ZZ/n)\hookrightarrow \H^1(k(t),\ZZ/n)$.  Set $\hat\gamma=\alpha_0+ (\theta_0,p)$.  Since $\per(\alpha_0)=|\inv_{x_0}\alpha_0|=q^e$ and $\per((\theta_0,p))=q^e$, $\per(\hat\gamma)=q^e$.  Using the same strategy as Proposition \ref{Proposition4.1} shows that $\ind(\hat\gamma)=q^{2e-t}$ and $\ind(q\hat\gamma)=q^{2e-t-1}$.  Therefore, $\hat\gamma$ satisfies the conclusions of Proposition \ref{Proposition4.1}.  We now check $\ind(s(\hat\gamma))=q^{2e-t}$.  Let $\theta=s(\theta_0)$ which is the unique lift of the constant extension $\theta_0$ to $\H^1(K(t),\ZZ/n)$.  The character $\theta$ defines a $p$-unramified extension $L/K(t)$ of degree $q^e$.  Then, $s(\hat\gamma)_L=(s(\xi),s((\pi_{x_0})))_L+(\theta,p)_L=(s(\xi),s((\pi_{x_0})))_L$.  Thus $\ind(s(\hat\gamma)_L)=\ind((s(\xi),s((\pi_{x_0})))_L)\leq |\xi|/|\theta|=q^{e-t}$ since $L$ is contained in the $p$-unramified constant extension defined by $s(\xi)$ which is a lift of $\xi$.  Therefore, $\ind(s(\hat\gamma))\leq [L:K(t)]q^{e-t}=q^{2e-t}=\ind(\hat\gamma)$.  Since $\ind(s(\hat\gamma))\geq \ind(\hat \gamma)$, we get the equality $\ind(s(\hat\gamma))=\ind(\hat\gamma)$.\label{easyremark}
\end{remark}

\begin{remark}
Set $R=\Z_p$ and $K=\Q_p$ and let $X$ be as in \ref{mainsetup}.  By \cite{Saltman-p-adic-c} the index of any Brauer class in $\Br(K(X))$ divides the square of its period.  Let $q$ be a prime with $q \ne p$.  Theorem \ref{t6} shows that over $K(X)$ there exist indecomposable division algebras of  index-period combination $(q^i,q^e)$ for all $1\leq e\leq i \leq 2e-1$ and all primes $q\ne p$.  In \cite{Suresh}, Suresh builds on the work of \cite{Saltman-cyclicity} to show that if $L/\Q_p(t)$ is a finite extension containing the $q$-th roots of unity, then every element in $\H^2(L,\mu_q)$ is a sum of at most two symbols.  In particular, a division algebra over $L$ of index $q^2$ and period $q$ must be decomposable as it is the sum of two symbols each of index $q$.  In a forthcoming paper by Brussel and Tengan, \cite{BT}, the dependence on an $q$-th root of unity is removed, showing that all division algebras of index-period combination $(q^2,q)$ over $L$ are decomposable for any finite extension $L/\Q_p(t)$.
\end{remark}
\subsection{Noncrossed products over $K(X)$}

In this section we construct noncrossed product division algebras over $K(X)$.  Throughout this section we adopt all notation from Section \ref{mainsetup}.  In particular, $K$ is the fraction field of $R$, a complete discrete valuation ring with uniformizer $\pi$ and residue field $k$, a field of characteristic $p$ and $X$ is a smooth curve over $R$.  We use the same strategy as in Section \ref{indecomposables}, that is, we construct noncrossed products of $q$-power index ($q$ a prime, $q \ne p=\chara k$) over $K\widehat(X)$ and use the splitting $s:\Br(K\widehat(X))' \to \Br(K(X))'$ from Theorem \ref{t5} to lift the noncrossed products to $K(X)$.

The method of constructing the noncrossed products over $K\widehat(X)$ follows the method in \cite{Brussel} where noncrossed products over $\Q(t)$ and $\Q((t))$ are constructed.  In order to mimic the construction in \cite{Brussel} we need only note that both the \v {C}ebotarev density theorem, and the Grunwald-Wang theorem hold for global fields which are characteristic $p$ function fields.  After noting these two facts, the reader can check that the arguments in \cite{Brussel} apply directly to obtain noncrossed products over $K\widehat(X)$ of index and period given below.  

\begin{iesetup}\label{last setup}Let $K$, $R$, $k$, $X$ and $X_0$ be as in Section \ref{mainsetup}.  For any positive integer $a$, let $\epsilon_a$ denote a primitive $a$-th root of unity.  Set $r$ and $s$ to be the maximum integers such that $\mu_{q^r} \subset k(X_0)^\times$ and $\mu_{q^s}\subset k(X_0)(\epsilon_{q^{r+1}})^\times$.  Let $n$ and $m$ be integers such that $n\geq 1$, $n\geq m$, and $n,m \in \{r\}\cup [s,\infty)$.  Let $a$ and $l$ be integers such that $l \geq n+m+1$ and $0\leq a \leq l-n$.  See \cite[p.384-385]{Brussel} for more information regarding these constraints.  \label{ncpsetup}
\end{iesetup}

\begin{theorem}  Let $K$, $R$, $k$, $X$ and $X_0$ be as in Section \ref{mainsetup}.  Let $q$ be a prime, $q \ne p=\chara k$ and let $a$ and $l$ be integers satisfying the properties of \ref{ncpsetup}.  Then there exists noncrossed product division algebras over $K\widehat(X)$ of index $q^{l+a}$ and period $q^l$.
\label{c3}
\end{theorem}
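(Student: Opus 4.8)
The plan is to transport the construction of \cite{Brussel}, which produces noncrossed products over $\Q((t))$, essentially verbatim, replacing the coefficient field $\Q$ by the global function field $k(X_0)$. The key observation is that, from the point of view of that construction, $K\widehat(X)$ is indistinguishable from a Laurent series field over a global field: by Section~\ref{mainsetup}, $K\widehat(X)$ is a complete discrete valuation field with uniformizer $\pi$ and residue field $k(X_0)$, and the latter is a global field, namely the function field of the smooth projective curve $X_0$ over the finite field $k$. The Witt exact sequence used in the proof of Theorem~\ref{t5} expresses every class of $\Br(K\widehat(X))'$ uniquely as $\alpha_0 + (\chi_0,\pi)$ with $\alpha_0\in\Br(k(X_0))'$ and $\chi_0\in\H^1(k(X_0),\Q/\Z)'$, and the Nakayama-Witt formula $\ind(\alpha_0+(\chi_0,\pi)) = |\chi_0|\cdot\ind(\alpha_0|_{k(X_0)(\chi_0)})$ governs its index exactly as over $\Q((t))$.

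First I would record the two arithmetic inputs on which \cite{Brussel} rests, the Chebotarev density theorem and the Grunwald--Wang theorem, and note that both are available for $k(X_0)$: Chebotarev holds for every global field, and Grunwald--Wang holds for all global fields as well (subject to its usual special case, addressed below). With these in hand I would build the Brauer class as in \cite{Brussel}: using Chebotarev, select a finite set of places $v_i$ of $k(X_0)$ whose Frobenius data prescribe the desired splitting behaviour in the relevant cyclotomic and residue extensions; using Grunwald--Wang, produce a global character $\chi_0$ of order $q^l$ with local components at the $v_i$ of the orders dictated by Setup~\ref{ncpsetup}; and, via Hasse reciprocity for $k(X_0)$, choose $\alpha_0$ with controlled invariants at the $v_i$ so that $\ind(\alpha_0|_{k(X_0)(\chi_0)})=q^a$. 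Then $\hat\gamma=\alpha_0+(\chi_0,\pi)$ has $\ind\hat\gamma=q^{l+a}$ and $\per\hat\gamma=q^l$ by the Nakayama-Witt computation, precisely as in Proposition~\ref{Proposition4.1}.

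The real content is the noncrossed product property, which I would establish by the same mechanism as \cite{Brussel}. A division algebra over the complete, tamely ramified field $K\widehat(X)$ is a crossed product only if it has a maximal subfield Galois over $K\widehat(X)$; since $q\ne p$ everything is tame, so such a subfield decomposes into an unramified part, corresponding to a Galois extension of $k(X_0)$, and a totally tamely ramified Kummer part. The existence of a Galois maximal subfield of degree $q^{l+a}$ would therefore force a Galois extension of $k(X_0)$ of a prescribed degree that splits $\alpha_0$ with compatible local behaviour at the $v_i$, and the arithmetic choices above are arranged exactly so that no group of the required order can be realized with this local behaviour. I expect the main obstacle to be precisely this verification, and within it the most delicate point to be the Grunwald--Wang special case: for $q=2$ the needed local character data may fail to be globally realizable, and the constraints $n,m\in\{r\}\cup[s,\infty)$ of Setup~\ref{ncpsetup} --- with $r,s$ the largest integers for which $\mu_{q^r}\subset k(X_0)^\times$ and $\mu_{q^s}\subset k(X_0)(\epsilon_{q^{r+1}})^\times$ --- are imposed to navigate around it, mirroring the corresponding hypotheses in \cite{Brussel}. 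Once these constraints are respected, the group-theoretic obstruction to a Galois maximal subfield carries over unchanged, and $\hat\gamma$ underlies a noncrossed product division algebra of index $q^{l+a}$ and period $q^l$.
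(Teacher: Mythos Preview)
Your proposal is correct and takes exactly the same approach as the paper: the paper's own argument for this theorem consists precisely of the observation that $K\widehat(X)$ is a complete discretely valued field with global residue field $k(X_0)$, that the \v{C}ebotarev density theorem and the Grunwald--Wang theorem both hold over global function fields of characteristic $p$, and that with these two ingredients the construction of \cite{Brussel} transfers verbatim. Your sketch of how the Brussel construction runs (Nakayama--Witt for the index, the Galois maximal subfield obstruction for the noncrossed product property, and the role of the constraints in Setup~\ref{ncpsetup} in handling the Grunwald--Wang special case) is in fact more detailed than what the paper records, but it is the same argument.
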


\begin{corollary}  Let $K$, $R$, $k$, $X$, $X_0$, $q$, $a$ and $l$ be as Theorem \ref{c3}.  Then, there exists noncrossed product division algebras over $K(X)$ of index $q^{l+a}$ and period $q^l$.
\label{c4}
\end{corollary}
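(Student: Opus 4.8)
The plan is to follow exactly the strategy used for Theorem~\ref{t6}: build the example over the completion $K\widehat(X)$, lift it with the index-preserving splitting $s$ of Theorem~\ref{t5}, and then show that the property of being a noncrossed product is forced back onto the lift. First I would apply Theorem~\ref{c3} to obtain a Brauer class $\hat\gamma\in\Br(K\widehat(X))$ whose underlying division algebra is a noncrossed product of index $q^{l+a}$ and period $q^l$, and set $\gamma=s(\hat\gamma)\in\Br(K(X))$. By Theorem~\ref{t5} the index is preserved, so $\ind(\gamma)=q^{l+a}$. Because $s$ is a group homomorphism with $\res\circ s=\id$, it is injective and $N\gamma=s(N\hat\gamma)$ for every integer $N$, so $N\gamma=0$ exactly when $N\hat\gamma=0$; hence $\per(\gamma)=\per(\hat\gamma)=q^l$. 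It then remains only to verify that the division algebra $D$ underlying $\gamma$ is a noncrossed product.

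For this I would argue the contrapositive: assuming $D$ is a crossed product, I would show that the division algebra underlying $\hat\gamma$ is also a crossed product, contradicting Theorem~\ref{c3}. If $D$ is a crossed product it contains a maximal subfield $L$ with $L/K(X)$ Galois of degree $m=[L:K(X)]=\ind(\gamma)=\ind(\hat\gamma)$, and $L$ splits $\gamma$. Let $v$ be the discrete valuation of $K(X)$ cut out by the special fiber, so $K\widehat(X)$ is its completion. Since $L/K(X)$ is separable, $L\otimes_{K(X)}K\widehat(X)=\prod_{i=1}^{g}L_{w_i}$, where $w_1,\dots,w_g$ are the places of $L$ over $v$, each $L_{w_i}$ is the corresponding completion, and $\sum_{i=1}^{g}[L_{w_i}:K\widehat(X)]=m$. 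As $L$ splits $\gamma$ and $K\widehat(X)\subseteq L_{w_i}$, each $L_{w_i}$ splits $\res_{K\widehat(X)}(\gamma)=\hat\gamma$, so $m=\ind(\hat\gamma)$ divides every $[L_{w_i}:K\widehat(X)]$.

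The crux is that these two facts together, $\sum_i[L_{w_i}:K\widehat(X)]=m$ and $m\mid[L_{w_i}:K\widehat(X)]$, force $g=1$ and $[L_w:K\widehat(X)]=m$ for the single place $w$ over $v$. Since $w$ is then the unique place over $v$, its decomposition group is all of $\Gal(L/K(X))$, so $L_w/K\widehat(X)$ is Galois of degree $m=\ind(\hat\gamma)$ and splits $\hat\gamma$; hence $L_w$ is a maximal Galois subfield of the division algebra underlying $\hat\gamma$, making it a crossed product and giving the contradiction. I expect this descent step to be the main obstacle, and its whole force comes from the equality $\ind(\gamma)=\ind(\hat\gamma)$ supplied by Theorem~\ref{t5} together with the fundamental identity $\sum_i e_if_i=m$; once the completion is pinned to a single place of full degree, the crossed-product structure transfers automatically. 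Concluding, $D$ must be a noncrossed product, which proves the corollary.
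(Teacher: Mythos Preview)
Your proof is correct and follows essentially the same approach as the paper: lift a noncrossed product from $K\widehat(X)$ via $s$, use Theorem~\ref{t5} to preserve the index, and argue by contradiction that a Galois maximal subfield of $D$ would produce one for $\widehat D$. The paper's argument is terser---it simply asserts that $MK\widehat(X)$ is Galois over $K\widehat(X)$ of degree $\ind(\widehat D)$ and splits $\widehat D$---whereas your place-theoretic analysis (forcing $g=1$ from $\sum_i[L_{w_i}:K\widehat(X)]=m$ and $m\mid[L_{w_i}:K\widehat(X)]$) supplies the justification the paper leaves implicit.
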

\proof
Let $\widehat D$ be a noncrossed product over $K\widehat(X)$ of index $q^{l+a}$, period $q^l$.  Let $D$ be the division algebra in the class of $s([\widehat D]) \in \Br(K(X))$.  By Theorem \ref{t5} we know that $\ind(D)=\ind(\widehat D)$.  Assume by way of contradiction that $D$ is a crossed product with maximal Galois subfield $M/K(X)$.  Then $MK\widehat(X)$ splits $\widehat D$, is of degree $\ind(\widehat D)$ and is Galois.  This contradicts the fact that $\widehat D$ is a noncrossed product.\endproof

\begin{remark} Noncrossed products were already known to exist over
  $\Q_p(t)$ by \cite{Brussel4}.  In the noncrossed products of \cite{Brussel4} the index
  always equals the period.  This is not the case in the above
  construction.
\end{remark}

\bibliographystyle{elsarticle-harv} 
\bibliography{NonCrossedProductBib}

\end{document}